\newtheorem{thm}{Theorem}
\newtheorem{prop}{Proposition}
\newtheorem{cor}[thm]{Corollary}
\newtheorem{ass}[thm]{Assumption}
\def \R {\mathbb{R}}
\begin{document}

\title[Homotopy Smoothing for Non-Smooth Problems]{Homotopy Smoothing for Non-Smooth Problems\\ with Lower Complexity than $O(1/\epsilon)$ }
\author{\Name{Yi Xu}$^\dag$\Email{yi-xu@uiowa.edu} \\
\Name{Yan Yan}$^\ddag$\thanks{The work of Y. Yan was done when he was a visiting student  with T. Yang at Department of Computer Science of the University of Iowa. }\Email{yan.yan-3@student.uts.edu.au} \\
\Name{Qihang Lin}$^\natural$\Email{qihang-lin@uiowa.edu} \\
\Name{Tianbao Yang}$^\dag$\ding{41} \Email{tianbao-yang@uiowa.edu}\\
       \addr$^\dag$ Department of Computer Science, The University of Iowa, Iowa City, IA 52242, USA \\
       \addr$^\ddag$ QCIS, University of Technology Sydney, NSW 2007, Australia\\ 
       \addr$^\natural$ Department of Management Sciences, The University of Iowa, Iowa City, IA 52242, USA\\
      }
      
\maketitle
\begin{abstract}
In this paper, we develop a novel {\bf ho}moto{\bf p}y  {\bf s}moothing (HOPS) algorithm for solving a family of non-smooth problems that is composed of a non-smooth term with an explicit max-structure and  a smooth term or  a simple non-smooth term whose proximal mapping is easy to compute. The best known iteration complexity for solving such non-smooth optimization problems is $O(1/\epsilon)$ without any assumption on the strong convexity. In this work, we will show that the proposed  HOPS achieved a lower iteration complexity of $\widetilde O(1/\epsilon^{1-\theta})$\footnote{$\widetilde O()$ suppresses a logarithmic factor.} with $\theta\in(0,1]$ capturing the local sharpness of the objective function around the optimal solutions. To the best of our knowledge, this is the lowest iteration complexity achieved so far for the considered non-smooth optimization problems without strong convexity assumption.  The HOPS algorithm employs Nesterov's smoothing technique and Nesterov's accelerated gradient method and runs in stages, which gradually decreases the smoothing parameter in a stage-wise manner until it yields a sufficiently good approximation of the original function. We show that HOPS enjoys a linear convergence for many well-known non-smooth problems (e.g., empirical risk minimization with a piece-wise linear loss function and $\ell_1$ norm regularizer, finding a point in a polyhedron, cone programming, etc). Experimental results verify the effectiveness of HOPS in comparison with Nesterov's smoothing algorithm and the primal-dual style of first-order methods. 
\end{abstract}

\section{Introduction}
In this paper, we consider the following optimization problem:
\begin{align}\label{eqn:opt}
\min_{x\in\Omega_1}F(x)\triangleq f(x) + g(x)
\end{align}
where $g(x)$ is a convex (but not necessarily smooth) function, $\Omega_1$ is a closed convex set and $f(x)$ is a convex but non-smooth function which can be explicitly  written as 
\begin{align}\label{eqn:g}
f(x) = \max_{u\in\Omega_2}\langle Ax, u\rangle  - \phi(u)
\end{align}
where $\Omega_2\subset\R^m$ is a closed convex bounded  set, $A\in\R^{m\times d}$, $\phi(u)$ is a convex function, and $\langle\cdot, \cdot\rangle$ is a scalar product. This family of non-smooth optimization problems  have applications in numerous domains, e.g., machine learning and statistics~\citep{chen2012}, image processing~\citep{Chambolle:2011:FPA:1968993.1969036}, SDP programming~\citep{DBLP:journals/mp/Nesterov07a},  cone programming~\citep{DBLP:journals/mp/LanLM11}, and etc. Several first-order methods have been developed for solving such non-smooth optimization problems including the primal-dual methods~\citep{nemirovski-2005-prox,Chambolle:2011:FPA:1968993.1969036}, Nesterov's smoothing algorithm~\citep{Nesterov:2005:SMN,DBLP:journals/siamjo/Nesterov05}~\footnote{The algorithm in~\citep{Nesterov:2005:SMN} was developed for handling a smooth component  $g(x)$, which can be extended to handling a non-smooth component $g(x)$ whose proximal mapping is easy to compute.}, and they can achieve $O(1/\epsilon)$ iteration complexity for finding an $\epsilon$-optimal solution, which is faster than the corresponding black-box lower complexity bounds by an order of magnitude. 

In this paper, we propose a novel homotopy smoothing (HOPS) algorithm for solving the problem in~(\ref{eqn:opt}) that achieves a lower iteration complexity than $O(1/\epsilon)$. In particular, the iteration complexity of HOPS is given by $\widetilde O(1/\epsilon^{1-\theta})$, where $\theta\in(0,1]$ captures the local sharpness (defined shortly) of the objective function around the optimal solutions. 
The proposed HOPS algorithm builds on the Nesterov's smoothing technique, i.e., approximating the non-smooth function $f(x)$ by a smooth function and optimizing the smoothed function to a desired  accuracy level. 

The striking difference between HOPS and  Nesterov's smoothing algorithm is that Nesterov uses a  fixed small smoothing parameter that renders a sufficiently accurate approximation of the non-smooth function $f(x)$, while HOPS adopts a homotopy strategy for setting the value of the smoothing parameter. It starts from a relatively large smoothing parameter and gradually decreases the smoothing parameter in a stage-wise manner until the smoothing parameter reaches a level that gives  a sufficiently good approximation of the non-smooth objective function.  
The benefit of using a homotopy strategy is that a  larger smoothing parameter yields a smaller smoothness constant and hence a lower iteration complexity for smoothed problems in earlier stages. For smoothed problems in later stages with larger smoothness constants,  warm-start can help reduce the number of iterations to converge.  As a result, solving a series of smoothed approximations with a smoothing parameter  from large to small and with warm-start is faster than solving one smoothed approximation with a very small smoothing parameter. To the best of our knowledge, {\bf this is the first work} that rigorously analyzes such a homotopy smoothing algorithm and establishes its theoretical guarantee on lower iteration complexities. The keys to our analysis of lower iteration complexity are (i) to leverage a {\bf global error inequality} (Lemma 1)~\citep{DBLP:journals/corr/arXiv:1512.03107} that bounds the distance of a solution to the $\epsilon$ sublevel set by a multiple of the functional distance; and (ii) to explore a local error bound condition to bound the multiplicative factor.

\section{Related Work}
In this section, we review some related work for solving the considered family of  non-smooth optimization problems. Traditional first-order methods such as subgradient descent for solving non-smooth optimization suffer from an $O(1/\epsilon^2)$ iteration complexity. Below, we review some related work for solving~(\ref{eqn:opt}) or its special cases with improved iteration complexities. There are two categories of algorithms, one is based on Neterov's smoothing technique and another one is primal-dual style of first-order methods. 

In the seminal  paper by \citet{Nesterov:2005:SMN}, he proposed a smoothing technique for a family of structured  non-smooth  optimization problems as in~(\ref{eqn:opt}) with $g(x)$ being a smooth function and $f(x)$ given in~(\ref{eqn:g}). By adding a strongly convex prox function in terms of $u$ with a smoothing parameter $\mu$ into the definition of $f(x)$, one can obtain a smoothed approximation of the original objective function. Then he developed an accelerated gradient method with an $O(1/t^2)$ convergence rate for the smoothed objective function with $t$ being the number of iterations, which implies an $O(1/t)$ convergence rate for the original objective function by  setting $\mu \approx  c/t$ with $c$ being a constant. Although he only considered a smooth component $g(x)$ in the original paper,  the algorithm and theory can be easily generalized to handle  a non-smooth component $g(x)$ assuming its proximal mapping is simple to compute by using accelerated proximal gradient methods for composite optimization problems~\citep{Composite,Beck:2009:FIS:1658360.1658364}. Later on, Nesterov proposed an excessive gap technique for solving the similar problem~\citep{DBLP:journals/siamjo/Nesterov05}, which avoids setting the value of the smoothing parameter with the number of iterations or the accuracy  given in advance. In~\citep{DBLP:journals/siamjo/Nesterov05}, Nesterov treats $g(x)$ and $\phi(u)$ symmetrically and simultaneously minimizes the smoothed lower bound and maximizes the smoothed upper bound by updating the primal and dual variables and  iteratively reducing the smoothing  parameters. He established an $O(1/t)$ convergence rate for linear functions $g(x)$ and $\phi(u)$. He also analyzed the case when $g(x)$ is strongly convex, which gives an improved convergence rate of  $O(1/t^2)$. 
The smoothing technique has been exploited to solving problems in machine learning~\citep{citeulike:11703809,DBLP:conf/icml/OuyangG12,DBLP:journals/oms/LinCP14} and statistics~\citep{chen2012}, and cone programming~\citep{DBLP:journals/mp/LanLM11,DBLP:journals/mp/Nesterov07a}.

The primal-dual style of first-order methods treat the problem as a convex-concave minimization problem, i.e., 
\[
\min_{x\in\Omega_1}\max_{u\in\Omega_2}g(x)  + \langle Ax, u\rangle - \phi(u)
\]
\citet{nemirovski-2005-prox} proposed a mirror prox method, which has a convergence rate of $O(1/t)$ by assuming that both $g(x)$ and $\phi(u)$ are smooth functions.~\citet{Chambolle:2011:FPA:1968993.1969036} designed first-order primal-dual algorithms, which tackle  $g(x)$ and $\phi(u)$ using proximal mapping and achieve the same convergence rate of $O(1/t)$ without assuming smoothness of $g(x)$ and $\phi(u)$. When $g(x)$ or $\phi(u)$ is strongly convex, their algorithms achieve $O(1/t^2)$ convergence rate. The effectiveness of  their algorithms was demonstrated on imaging problems. Recently, the primal-dual style of first-order methods have been employed to solve non-smooth optimization problems in machine learning where both the loss function and the regularizer are non-smooth~\citep{DBLP:journals/ml/YangMJZ14Non}.~\citet{DBLP:journals/mp/LanLM11} also considered Nemirovski's prox method for solving cone programming problems. 

The key condition for us to develop an improved  convergence is closely related to local error bounds (LEB)~\citep{DBLP:journals/mp/Pang97} and more generally the Kurdyka-\L ojasiewicz property~\citep{lojasiewicz1965ensembles,Bolte:2006:LIN:1328019.1328299}. The LEB characterizes the relationship between  the distance of a local solution to the optimal set and the optimality  gap of the solution in terms of objective value. The Kurdyka-\L ojasiewicz property characterizes that property of a function that whether it can be made ``sharp''  by some transformation. 
Recently, these conditions/properties have been explored for feasible descent methods~\citep{luo1993error}, non-smooth optimization~\citep{DBLP:journals/mp/GilpinPS12}, gradient and subgradient methods~\citep{arxiv:1510.08234,DBLP:journals/corr/arXiv:1512.03107}. {\bf It is notable} that our local error bound condition is different from the one used in~\citep{luo1993error,DBLP:journals/corr/ZhouS15a} which  bounds the distance of a point to the optimal set by the norm of the projected or proximal gradient at that point instead of the functional distance, consequentially it requires some smoothness assumption about the objective function. By contrast, the local error bound condition in this paper covers a much broad family of functions and thus it is more general. Recent work~\citep{DBLP:journals/corr/nesterov16linearnon,HuiZhang16b} have shown that the error bound in~\citep{luo1993error,DBLP:journals/corr/ZhouS15a} is a special case of our considered error bound with $\theta=1/2$. Two mostly related work leveraging a similar error bound to ours are discussed in order. 
\citet{DBLP:journals/mp/GilpinPS12} considered the  two-person zero-sum games, which is a special case of~(\ref{eqn:opt}) with $g(x)$ and $\phi(u)$ being zeros and $\Omega_1$ and $\Omega_2$ being polytopes. 
The present work is a non-trivial generalization of their work that leads to improved convergence for a much broader family of non-smooth optimization problems. In particular, their result is just a special case of our result when the constant $\theta$ that captures the local sharpness is one for problems whose epigraph is a polytope. Recently,~\citet{DBLP:journals/corr/arXiv:1512.03107} proposed a restarted subgradient method by exploring the local error bound condition or more generally the  Kurdyka-\L ojasiewicz property, resulting in an $\widetilde O(1/\epsilon^{2(1-\theta)})$ iteration complexity with the same constant of $\theta$. In contrast, our result is  an improved iteration complexity of $\widetilde O(1/\epsilon^{1-\theta})$. 

It is worth emphasizing that the proposed homotopy smoothing technique is different from recently proposed homotopy methods for sparse learning (e.g.,  $\ell_1$ regularized  least-squares problem~\citep{DBLP:journals/siamjo/Xiao013}), though a homotopy strategy on an involved parameter is also  employed to boost the convergence. In particular, the involved parameter in the homotopy methods for sparse learning is the regularization parameter before the $\ell_1$ regularization, while the parameter in the present work is the introduced smoothing parameter. In addition, the benefit of starting from a relatively large regularization parameter in sparse learning is  the sparsity of the solution, which makes it possible to explore the restricted strong convexity for proving  faster convergence. We do not make such assumption of the data and we are mostly interested in that when both $f(x)$ and $g(x)$ are non-smooth. 

Lastly, we discuss several closely related recent work that also employ homotopy strategies on the smoothing parameter, but are different in how to decrease the smoothing parameter and in the iteration complexities from the proposed HOPS.   \citet{arxiv:1509.00106} proposed an adaptive smoothing algorithm by combining Nesterov's accelerated proximal gradient method and a homotopy strategy for smoothing parameter. Different from Nesterov's smoothing,  their algorithm and analysis require that the added prox function  is not only $\mu$-strongly convex but also smooth with a smoothness parameter $\beta \geq \mu$. In addition, the smoothing parameter is decreased iteratively in the order of $O(1/t)$ where $t$ is the iteration number.  In terms of iteration complexity, when $\beta > \mu$,  their method has an iteration complexity  of $\widetilde O(1/\epsilon)$, and when $\beta = \mu$ it achieves the same  iteration complexity of $O(1/\epsilon )$ as Nesterov's smoothing method \citep{Nesterov:2005:SMN}. In contrast, the proposed HOPS employs a different homotopy strategy that decreases the smoothing parameter geometrically in a stage-wise manner and has a better iteration complexity. 

In~\citep{arXiv:1511.02974}, the authors introduced a new smooth approximation  algorithm by leveraging the strict lower bound of the objective function and a function growth condition. The function growth condition is an inequality  that  the distance of a point to the optimal solution set is less than  a growth constant multiple of the difference between the objective value at the point and the strict lower bound. Their algorithm also has two loops where the outer loop decreases the smoothing parameter  according to the difference between the objective value at current solution and the strict lower bound and the inner loop exploits Nesterov's accelerated gradient method  to solve the intermediate smoothed problem until the relative improvement in terms of the strict lower bound is above $0.8$. In terms of the iteration complexity, their algorithm has an $O\left( \frac{\log H}{ \sqrt{\epsilon'}} + \frac{1}{\epsilon'}\right)$ iteration  complexity for obtaining  an $\epsilon'$-relative optimal solution,  i.e., $F(x_t) - F_* \leq \epsilon' (F_* - F_{slb})$~\footnote{$F_*$ is the optimal objective value and $F_{slb}$ is its strict lower bound}, where $H = \frac{F(x_0) - F_*}{F_* - F_{slb}}$. As discussed in~\citep{arXiv:1511.02974}, their algorithm is favorable when the distance of the initial solution to the optimal set is sufficiently large as their iteration complexity has a logarithmic dependence on the initial solution. However, their algorithm still suffers $O(1/\epsilon)$ iteration complexity in the worst case. In contrast, HOPS leverages the LEB condition instead of the function growth condition so that it not only enjoys a logarithmic dependence on the initial solution but also a reduced iteration complexity. 

More recently, \citet{arXiv:1603.05642} proposed black-box reduction methods for convex optimization by reducing the objective function to a $\beta$-smooth and $\mu$-strongly convex  function and employing an algorithm which satisfies the homotopy objective decrease (HOOD) property (defined shortly). For a non-smooth and  non-strongly convex objective function, they propose to add a $\mu$-strongly convex regularization term $\frac{\mu}{2}\| x - x_0 \|^2$, where $x_0$ is a starting point, and smooth the non-smooth function in the finite sum form $\sum_i f_i(x)$ by applying Nesterov's smoothing technique to  its Fenchel conjugate, which results in a smooth and strongly convex function. By assuming each $f_i(x)$ is $G$-Lipschitz continuous and $\|x_0-x_*\| \leq D$, their method enjoys  an $O(GD/\epsilon)$ iteration complexity by employing accelerated gradient descent method for solving the resulted smooth and strongly convex optimization problems. Their reduction method uses a similar homotopy strategy on the smoothing parameter as HOPS.  Nonetheless, we emphasize that  HOPS is fundamentally different from their method, in particular the inner loop in their method is to ensure the HOOD property of the black-box algorithm  for minimizing  the intermediate smooth and strongly convex function $f(x)$, i.e., for any starting point $x_0$, it produces an output $x'$ satisfying $f(x') - \min f(x) \leq \frac{f(x_0)-\min f(x)}{4}$; in contrast,  the inner loop of HOPS is to solve the intermediate smoothed problem to an accuracy that matches the oder of the current smoothing parameter. By leveraging the local error bound condition, we are able to achieve an iteration complexity with a better dependence on the accuracy level $\epsilon$ and the distance of the initial solution to the optimal set. 

Finally, we note that a similar homotopy strategy is employed in Nesterov's smoothing algorithm  for solving an $\ell_1$ norm minimization problem subject to a constraint for recovering a sparse solution~\citep{Becker:2011:NFA:2078698.2078702}. However, we would like to draw readers' attention to that they did not provide any theoretical guarantee on the iteration complexity of the homotopy strategy and consequentially their implementation is ad-hoc without guidance from theory. More importantly, our developed algorithms and theory apply  to a much broader family of problems. 

\section{Preliminaries}
\vspace*{-0.1in}
We present some preliminaries in this section. Let $\|x\|$ denote the Euclidean norm on the primal variable $x$.
A function $h(x)$ is $L$-smooth  in terms of $\|\cdot\|$, if 
\[
\|\nabla h(x) - \nabla h(y)\|\leq L\|x - y\|
\]
We remark here that the generalization to a smoothness definition with respect to a $p$-norm  $\|\cdot\|_p$ with $p\in(1,2]$ is mostly straightforward. We defer the discussion to the supplement. 
Let $\|u\|_+$ denote a norm on the dual variable, which is not necessarily the Euclidean norm. 
Denote by $\omega_+(u)$  a 1-strongly convex function of $u$ in terms of $\|\cdot\|_+$. 

For the optimization problem in~(\ref{eqn:opt}), we let $\Omega_*, F_*$ denote the set of optimal solutions and optimal value, respectively, and make the following assumption throughout the paper. 
\begin{ass}\label{ass:1} For a convex minimization problem~(\ref{eqn:opt}), we assume
(i) there exist $x_0\in\Omega_1$ and $\epsilon_0\geq 0$ such that $F(x_0) - \min_{x\in\Omega_1}F(x)\leq \epsilon_0$;
(ii) $f(x)$ is characterized as in~(\ref{eqn:g}), where $\phi(u)$ is a convex function; 
(iii) There exists a  constant $D$ such that $\max_{u\in\Omega_2}\omega_+(u)\leq D^2/2$; 
(iv) $\Omega_*$ is a non-empty convex compact set.
\end{ass}
Note that: 1) Assumption~\ref{ass:1}(i) assumes that the objective function is lower bounded; 2)  Assumption~\ref{ass:1}(iii) assumes that $\Omega_2$ is a bounded set, which is also required in~\citep{Nesterov:2005:SMN}. 

In addition, for brevity we assume that $g(x)$ is simple enough~\footnote{If $g(x)$ is smooth, this assumption can be relaxed. We will defer the discussion and result on a smooth function $g(x)$ to Section~\ref{smoothHOPS}.}  such that the proximal mapping defined below is easy to compute similar to~\citep{Chambolle:2011:FPA:1968993.1969036}:
\begin{align}\label{prox:g}
P_{\lambda g}(x) = \min_{z\in\Omega_1} \frac{1}{2}\|z - x\|^2 + \lambda g(z)
\end{align}
Relying on the proximal mapping, the key updates in the optimization algorithms presented below take  the following form:
\begin{align}\label{eqn:prox}
\Pi^c_{v, \lambda g}(x) =\arg\min_{z\in\Omega_1}\frac{c}{2}\|z - x\|^2 +  \langle v, z\rangle  + \lambda g(z) 
\end{align}
For any $x\in\Omega_1$, let $x^*$ denote the closest optimal solution in $\Omega_*$ to $x$ measured in terms of norm $\|\cdot\|$, i.e., $x^* = \arg\min_{z\in\Omega_*}\|z - x\|^2$, 
which is unique because $\Omega_*$ is a non-empty convex compact set~\citep{DBLP:conf/nips/HouZSL13}.  We denote by  $\mathcal L_\epsilon$ the  $\epsilon$-level set of $F(x)$ and  by $\mathcal S_\epsilon$  the $\epsilon$-sublevel set of $F(x)$, respectively, i.e.,
\begin{align*}
\mathcal L_\epsilon &= \{x\in\Omega_1: F(x) = F_* + \epsilon\},\quad\mathcal S_\epsilon = \{x\in\Omega_1: F(x) \leq F_* + \epsilon\}
\end{align*}
It follows from~\citep[Corollary 8.7.1]{rockafellar1970convex} that the sublevel set $\mathcal S_\epsilon$ is bounded for any $\epsilon\geq 0$ and so as the level set $\mathcal L_\epsilon$ due to that $\Omega_*$ is bounded. Define $dist(\mathcal L_\epsilon, \Omega_*)$ to be the maximum distance of points on the level set $\mathcal L_\epsilon$ to the optimal set $\Omega_*$, i.e.,
\begin{align}\label{eqn:keyB}
dist(\mathcal L_\epsilon, \Omega_*)= \max_{x\in\mathcal L_\epsilon}\left[dist(x,\Omega_*)\triangleq\min_{z\in\Omega_*}\|x - z\|\right].
\end{align}
Due to that $\mathcal L_\epsilon$ and $\Omega_*$ are bounded, $dist(\mathcal L_\epsilon, \Omega_*)$ is also bounded. 
Let $x_{\epsilon}^\dagger$ denote the closest point in the $\epsilon$-sublevel set to $x$, i.e.,
\begin{equation}\label{eqn:xepsilon}
\begin{aligned}
x_\epsilon^{\dagger}&=\arg\min_{z\in\mathcal S_\epsilon}\|z - x\|^2
\end{aligned}
\end{equation}
It is easy to show that $x_\epsilon^{\dagger}\in\mathcal L_\epsilon$ when $x\notin\mathcal S_\epsilon$ (using the KKT condition). 

\section{Homotopy Smoothing}
In this section, we first describe Nesterov's smoothing technique, and then present the HOPS algorithm and its convergence analysis. Next, we will discuss the local error bound and its application. Finally, we will discuss the HOPS when $g$ is smooth, and also extend the HOPS to general $p$-norm.
\subsection{Nesterov's Smoothing}
We first present the Nesterov's smoothing technique  and accelerated proximal gradient methods for solving the smoothed problem due to that the proposed algorithm builds upon these techniques.  The idea of smoothing is to construct a smooth function  $f_\mu(x)$ that well approximates $f(x)$. Nesterov considered the following function 
\begin{align*}
f_{\mu}(x) = \max_{u\in\Omega_2}\langle Ax, u\rangle  - \phi(u) -\mu\omega_+(u)
\end{align*}
 It was shown in~\citep{Nesterov:2005:SMN} that $f_\mu(x)$ is smooth w.r.t $\|\cdot\|$ and its smoothness parameter is given by $L_\mu = \frac{1}{\mu}\|A\|^2$ where $\|A\|$ is defined by $\|A\|=\max_{\|x\|\leq 1}\max_{\|u\|_+\leq 1}\langle Ax, u\rangle$. Denote by 
\[
u_\mu(x) = \arg\max_{u\in\Omega_2}\langle Ax, u\rangle  - \phi(u) -\mu \omega_+(u)
\]
The gradient of $f_\mu(x)$ is computed by $\nabla f_\mu(x) = A^{\top}u_\mu(x)$. Then 
\begin{align}\label{eqn:approx}
f_\mu(x)\leq f(x)\leq f_\mu(x) + \mu D^2/2
\end{align}
From the inequality above, we can see that when $\mu$ is very small, $f_\mu(x)$ gives a good approximation of $f(x)$. This motivates us to solve the following composite optimization problem
\begin{align*}
\min_{x\in\Omega_1} F_\mu(x)\triangleq  f_\mu(x) + g(x)
\end{align*}
Many works have studied such an optimization problem~\citep{Beck:2009:FIS:1658360.1658364,citeulike:6604666} and the best convergence rate is given by $O(L_\mu/t^2)$, where $t$ is the total number of iterations. We present a variant of accelerated proximal gradient (APG) methods in Algorithm~\ref{alg:0} that works even with  $\|x\|$ replaced with a general norm as long as its square is strongly convex.  We make several remarks about Algorithm~\ref{alg:0}: (i) the variant here is similar to Algorithm 3 in \citep{citeulike:6604666} and the algorithm proposed in~\citep{Nesterov:2005:SMN} except that the prox function $d(x)$ is replaced by $\|x - x_0\|^2/2$ in updating the sequence of $z_k$, which is assumed to be $\sigma_1$-strongly convex w.r.t $\|\cdot\|$; (ii) If $\|\cdot\|$ is simply  the Euclidean norm, a simplified algorithm with only one update in~(\ref{eqn:prox}) can be used (e.g., FISTA~\citep{Beck:2009:FIS:1658360.1658364});  (iii)  if $L_\mu$ is difficult to compute, we can use the backtracking trick (see~\citep{Beck:2009:FIS:1658360.1658364,citeulike:6604666}). 

\begin{algorithm}[t]
\caption{An Accelerated Proximal Gradient Method: $\text{APG}(x_0, t, L_\mu)$} \label{alg:0}
\begin{algorithmic}[1]
\STATE \textbf{Input}: the  number of iterations $t$, the initial solution $x_0$, and the smoothness constant $L_\mu$
\STATE Let $\theta_0=1$, $V_{-1}=0$, $\Gamma_{-1}=0$, $z_0=x_0$
\STATE Let $\alpha_k$ and $\theta_k$ be two sequences given in Theorem~\ref{thm:APG}. 
\FOR{$k=0,\ldots, t-1$}
    \STATE Compute $y_{k} = (1-\theta_k)x_k + \theta_k z_k$
    \STATE Compute $v_k=\nabla f_\mu(y_k)$, $V_k =V_{k-1} + \frac{v_k}{\alpha_k}$, and $\Gamma_{k}=\Gamma_{k-1} + \frac{1}{\alpha_k}$ 
     \STATE Compute $z_{k+1}=\Pi^{L_\mu/\sigma_1}_{V_k, \Gamma_kg}(x_0)$ and $x_{k+1} = \Pi^{L_\mu}_{v_k, g}(y_k)$
   \ENDFOR
\STATE \textbf{Output}:  $x_t$
\end{algorithmic}
\end{algorithm}

The following theorem states the convergence result for APG. 
\begin{thm}\label{thm:APG}\citep{Nesterov:2005:SMN,citeulike:6604666}
Let $\theta_k  = \frac{2}{k+2}$, $\alpha_k = \frac{2}{k+1}, k\geq 0$ or $\alpha_{k+1} = \theta_{k+1} = \frac{\sqrt{\theta_k^4+4\theta_k^2} - \theta_k^2}{2}, k\geq 0$.  For any $x\in\Omega_1$, we have
\begin{align}
F_\mu(x_t)  - F_\mu(x)\leq \frac{2L_\mu\|x - x_0\|^2 }{t^2}
\end{align}
\end{thm}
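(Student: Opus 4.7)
The plan is to follow the estimate-sequence analysis for Tseng's variant of accelerated proximal gradient, resting on three ingredients: (a) the $L_\mu$-smoothness of $f_\mu$, (b) the convexity of $f_\mu$ and $g$, and (c) the $\sigma_1$-strong convexity of $\frac{1}{2}\|\cdot - x_0\|^2$ that underlies the $z_k$-update.

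First I would use smoothness of $f_\mu$ combined with the proximal definition $x_{k+1} = \Pi^{L_\mu}_{v_k, g}(y_k)$ to derive the standard three-point inequality: for every $u \in \Omega_1$,
\begin{align*}
F_\mu(x_{k+1}) \le f_\mu(y_k) + \langle v_k, u - y_k\rangle + g(u) + \frac{L_\mu}{2}\|u - y_k\|^2 - \frac{L_\mu}{2}\|u - x_{k+1}\|^2.
\end{align*}
In parallel, the optimality of $z_{k+1}$ in its strongly convex defining problem yields, for every $z \in \Omega_1$,
\begin{align*}
\langle V_k, z_{k+1} - z\rangle + \Gamma_k\bigl(g(z_{k+1}) - g(z)\bigr) \le \frac{L_\mu}{2\sigma_1}\bigl(\|z - x_0\|^2 - \|z_{k+1} - x_0\|^2 - \|z - z_{k+1}\|^2\bigr).
\end{align*}
I would then set $u = (1-\theta_k)x_k + \theta_k z_{k+1}$ in the first inequality; the definition $y_k = (1-\theta_k)x_k + \theta_k z_k$ gives $u - y_k = \theta_k(z_{k+1} - z_k)$, and convexity supplies $F_\mu(u) \le (1-\theta_k)F_\mu(x_k) + \theta_k F_\mu(z_{k+1})$. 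Combining these with the second inequality evaluated at $z = x$, and using the weight identities $\Gamma_k = \Gamma_{k-1} + 1/\alpha_k$ and $\theta_k = 1/(\alpha_k\Gamma_k)$ to telescope the cross-terms, yields the one-step recursion
\begin{align*}
\Gamma_k F_\mu(x_{k+1}) + \frac{L_\mu}{2\sigma_1}\|z_{k+1} - x_0\|^2 \le \Gamma_{k-1}F_\mu(x_k) + \frac{L_\mu}{2\sigma_1}\|z_k - x_0\|^2 + \frac{1}{\alpha_k}F_\mu(x).
\end{align*}

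The main obstacle is the coefficient bookkeeping: one must verify $\Gamma_k\theta_k^2 \le 1/\sigma_1$ so that the $\frac{\theta_k^2 L_\mu}{2}\|z_{k+1}-z_k\|^2$ term produced by the smoothness inequality (via $\|u - y_k\|^2 = \theta_k^2\|z_{k+1}-z_k\|^2$) is absorbed by the $\frac{L_\mu}{2\sigma_1}\|z - z_{k+1}\|^2$ term from the optimality condition. For the recursive schedule this is equivalent to $\theta_{k+1}^2 = \theta_k^2(1-\theta_{k+1})$, which holds with equality by the very definition of $\theta_{k+1}$; for the explicit choice $\theta_k = 2/(k+2)$, $\alpha_k = 2/(k+1)$ it follows by direct computation since $\Gamma_k = (k+1)(k+2)/4$ and hence $\Gamma_k\theta_k^2 = (k+1)/(k+2) \le 1$. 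Once the recursion is in hand, telescoping from $k=0$ (with $\Gamma_{-1}=0$ and $z_0 = x_0$) to $k=t-1$ and dropping the nonnegative final distance term yields $F_\mu(x_t) - F_\mu(x) \le L_\mu\|x-x_0\|^2/(2\sigma_1 \Gamma_{t-1})$; the lower bound $\Gamma_{t-1} \ge t^2/4$ (valid for both schedules by a standard induction) then delivers the claimed $2L_\mu\|x - x_0\|^2/t^2$ rate in the Euclidean case $\sigma_1 = 1$.
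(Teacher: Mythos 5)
The paper itself offers no proof of this theorem---it is quoted from Nesterov (2005) and Tseng's note on accelerated proximal methods---so the only meaningful comparison is with those sources, and your overall strategy (three-point inequality for the $x_{k+1}$-step, optimality of the $z_{k+1}$-step, convex combination at $u=(1-\theta_k)x_k+\theta_k z_{k+1}$, absorption of the $\tfrac{\theta_k^2L_\mu}{2}\|z_{k+1}-z_k\|^2$ term via $\Gamma_k\theta_k^2\le 1/\sigma_1$, and the bound $\Gamma_{t-1}\ge t^2/4$) is exactly the standard argument used there. Your verification of the two step-size schedules, including the identity $\theta_{k+1}^2=\theta_k^2(1-\theta_{k+1})$ and the computation $\Gamma_k=(k+1)(k+2)/4$, is correct.

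However, the one-step recursion you display is not the right one, and as written it breaks the proof. Telescoping
\begin{align*}
\Gamma_k F_\mu(x_{k+1}) + \tfrac{L_\mu}{2\sigma_1}\|z_{k+1} - x_0\|^2 \le \Gamma_{k-1}F_\mu(x_k) + \tfrac{L_\mu}{2\sigma_1}\|z_k - x_0\|^2 + \tfrac{1}{\alpha_k}F_\mu(x)
\end{align*}
from $k=0$ to $t-1$ with $\Gamma_{-1}=0$ and $z_0=x_0$ gives $\Gamma_{t-1}F_\mu(x_t)\le \Gamma_{t-1}F_\mu(x)$ for \emph{every} $x$, i.e.\ that $x_t$ is already optimal---the term $\|x-x_0\|^2$ never enters. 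The problem is the choice of potential: because Algorithm~1 is the dual-averaging variant ($z_{k+1}$ is computed from the fixed center $x_0$ and the aggregated gradients $V_k$, not from $z_k$), the quantity that telescopes is not $\tfrac{L_\mu}{2\sigma_1}\|z_k-x_0\|^2$ but the minimum value of the regularized cumulative model, $\Psi_k=\langle V_k,z_{k+1}\rangle+\Gamma_k g(z_{k+1})+\tfrac{L_\mu}{2\sigma_1}\|z_{k+1}-x_0\|^2$. One proves $\Gamma_kF_\mu(x_{k+1})-\Psi_k\le\Gamma_{k-1}F_\mu(x_k)-\Psi_{k-1}$ (the extra $\tfrac{L_\mu}{2}\|z_{k+1}-z_k\|^2$ gained from the strong convexity of the $\Psi$-objective is what absorbs the smoothness remainder), and only at the very end compares $\Psi_{t-1}$ against the fixed point $x$, using convexity of $f_\mu$ to get $\Psi_{t-1}\le\Gamma_{t-1}F_\mu(x)+\tfrac{L_\mu}{2\sigma_1}\|x-x_0\|^2$; this is the step that produces the $\|x-x_0\|^2$ term in the conclusion. (Equivalently, your recursion would be correct for the mirror-descent variant with potential $\tfrac{L_\mu}{2\sigma_1}\|x-z_k\|^2$, but that is a different algorithm from the one in Algorithm~1.) With the potential corrected, the rest of your bookkeeping goes through and yields the stated bound.
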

Combining  the above convergence result with the relation  in~(\ref{eqn:approx}), we can establish the iteration complexity of Nesterov's smoothing algorithm for solving the original problem~(\ref{eqn:opt}). 
\begin{cor}\label{cor:1}\citep{Nesterov:2005:SMN}
For any $x\in\Omega_1$, we have
\begin{align}\label{eqn:cs}
F(x_t)- F(x)\leq  \mu D^2/2 + \frac{2L_\mu\|x - x_0\|^2}{t^2} 
\end{align}
In particular in order to have $F(x_t)\leq F_* + \epsilon$, it suffices to set $\mu \leq  \frac{\epsilon}{D^2}$ and $t \geq \frac{2D\|A\|\|x_0 - x_*\|}{\epsilon}$, where $x_*$ is an optimal solution to~(\ref{eqn:opt}). 
\end{cor}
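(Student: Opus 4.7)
The proof plan rests on two ingredients already in hand: the pointwise sandwich~(\ref{eqn:approx}), namely $f_\mu(x) \leq f(x) \leq f_\mu(x) + \mu D^2/2$, together with the APG guarantee of Theorem~\ref{thm:APG} on the smoothed composite objective $F_\mu = f_\mu + g$. Since the $g(x)$ term cancels in $F - F_\mu$, the sandwich lifts verbatim to $F_\mu(x) \leq F(x) \leq F_\mu(x) + \mu D^2/2$ for every $x \in \Omega_1$. First I would decompose
\[
F(x_t) - F(x) = [F(x_t) - F_\mu(x_t)] + [F_\mu(x_t) - F_\mu(x)] + [F_\mu(x) - F(x)]
\]
and bound the three pieces respectively by $\mu D^2/2$ (upper half of the sandwich applied at $x_t$), by $2L_\mu\|x - x_0\|^2/t^2$ (Theorem~\ref{thm:APG}), and by $0$ (lower half of the sandwich applied at $x$); summing the three bounds yields the claimed inequality~(\ref{eqn:cs}).

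For the ``in particular'' clause I would specialize the inequality to $x = x_*$, an optimal solution, and balance the two additive error sources against $\epsilon$. Allocating $\epsilon/2$ to each: the requirement $\mu D^2/2 \leq \epsilon/2$ yields $\mu \leq \epsilon/D^2$; substituting $L_\mu = \|A\|^2/\mu$ at the boundary value $\mu = \epsilon/D^2$ gives $L_\mu = \|A\|^2 D^2/\epsilon$, so the requirement $2L_\mu\|x_0 - x_*\|^2/t^2 \leq \epsilon/2$ rearranges to $t \geq 2D\|A\|\|x_0 - x_*\|/\epsilon$, as stated.

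I do not anticipate a substantive obstacle here. Once one observes that $g$ drops out of the smoothing-gap inequality, the argument is a mechanical arithmetic balancing of $\mu D^2/2$ against $O(L_\mu/t^2)$ under the constraint $\mu L_\mu = \|A\|^2$. The $O(1/\epsilon)$ rate emerges precisely because this product is $\mu$-independent, forcing a square-root trade-off that is saturated by the even split; any other allocation of the error budget between smoothing bias and optimization error only affects the leading constant.
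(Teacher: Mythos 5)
Your proof is correct and follows exactly the route the paper indicates (it gives no detailed proof, only the remark that the corollary follows by combining Theorem~\ref{thm:APG} with the sandwich inequality~(\ref{eqn:approx})); your three-term decomposition and the even $\epsilon/2$ split are the standard way to carry that out. Your parenthetical observation that the arithmetic for the $t$ bound uses the boundary value $\mu=\epsilon/D^2$ (since a strictly smaller $\mu$ inflates $L_\mu$) is the right reading of the slightly loose ``$\mu\leq\epsilon/D^2$'' in the statement.
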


\subsection{Homotopy Smoothing}
From the convergence result in~(\ref{eqn:cs}), we can see that in order to obtain a very accurate solution, we have to set $\mu$ - the smoothing parameter - to be a very small value, which will cause the blow-up of the second term because $L_\mu\propto 1/\mu$. On the other hand, if $\mu$ is set to be a relatively large value, then $t$ can be set to be a relatively small value to match the first term, which may lead to a not sufficiently accurate solution. It seems that the $O(1/\epsilon)$ is unbeatable. However, if we adopt a homotopy strategy, i.e., starting from a relatively large value $\mu$ and optimizing the smoothed function with a certain number of iterations $t$ such that the second term in~(\ref{eqn:cs}) matches the first term, which will give $F(x_t) - F(x_*)\leq O(\mu)$. Then we can reduce the value of $\mu$ by a constant factor $b>1$ and warm-start the optimization process again  from $x_t$. The key observation is that although $\mu$ decreases and $L_\mu$ increases, the other term $\|x_* - x_t\|$ is also reduced compared to $\|x_* - x_0\|$, which could  cancel the blow-up effect caused by increased  $L_\mu$. As a result, we expect to use the same number of iterations to optimize the smoothed function with a smaller $\mu$ such that $F(x_{2t}) - F(x_*)\leq O(\mu/b)$. 

To formalize our observation, we first present the following key lemma  below. 
\begin{lemma}[\cite{DBLP:journals/corr/arXiv:1512.03107}]\label{lem:1}
For any $x\in\Omega_1$ and $\epsilon>0$, we have
\[
\|x - x^\dagger_\epsilon\|\leq \frac{dist(x^\dagger_\epsilon, \Omega_*)}{\epsilon}(F(x) - F(x_\epsilon^\dagger))
\]
where $x^\dagger_\epsilon\in\mathcal S_\epsilon$ is the closest point in the $\epsilon$-sublevel set to $x$ as defined in~(\ref{eqn:xepsilon}). 
\end{lemma}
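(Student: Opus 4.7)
The inequality is trivial when $x \in \mathcal{S}_\epsilon$, since then $x = x^\dagger_\epsilon$ and both sides vanish. So the plan is to assume $x \notin \mathcal{S}_\epsilon$, in which case (as noted just before the lemma) $x^\dagger_\epsilon \in \mathcal{L}_\epsilon$, i.e.\ $F(x^\dagger_\epsilon) = F_* + \epsilon$. The strategy is to produce, by convex combination, an explicit point in the sublevel set $\mathcal{S}_\epsilon$ whose distance to $x$ can be controlled by the functional gap; then invoke the minimality of $x^\dagger_\epsilon$.

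First I would let $z^* \in \Omega_*$ be the optimal point closest to $x^\dagger_\epsilon$, so that $\|x^\dagger_\epsilon - z^*\| = dist(x^\dagger_\epsilon, \Omega_*)$. Consider the line segment from $x$ to $z^*$, parametrized as $y_\alpha = \alpha x + (1-\alpha) z^*$. By convexity of $F$,
\[
F(y_\alpha) \leq \alpha F(x) + (1-\alpha) F_*,
\]
and I would choose $\alpha = \epsilon/(F(x) - F_*)$, which lies in $(0,1)$ because $F(x) > F_* + \epsilon$. With this choice $F(y_\alpha) \leq F_* + \epsilon$, so $y_\alpha \in \mathcal{S}_\epsilon$. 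Since $x^\dagger_\epsilon$ is the closest point of $\mathcal{S}_\epsilon$ to $x$, this yields
\[
\|x - x^\dagger_\epsilon\| \leq \|x - y_\alpha\| = (1-\alpha)\|x - z^*\| = \frac{F(x) - F_* - \epsilon}{F(x) - F_*}\,\|x - z^*\|.
\]

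Next I would use $F(x^\dagger_\epsilon) = F_* + \epsilon$ to rewrite the numerator as $F(x) - F(x^\dagger_\epsilon)$ and apply the triangle inequality $\|x - z^*\| \leq \|x - x^\dagger_\epsilon\| + \|x^\dagger_\epsilon - z^*\| = \|x - x^\dagger_\epsilon\| + dist(x^\dagger_\epsilon, \Omega_*)$. Writing $a = \|x - x^\dagger_\epsilon\|$, $b = dist(x^\dagger_\epsilon, \Omega_*)$, and $\Delta = F(x) - F(x^\dagger_\epsilon)$, this gives the inequality
\[
a \leq \frac{\Delta}{\Delta + \epsilon}(a + b),
\]
which rearranges to $a \leq (\Delta/\epsilon)\,b$, exactly the stated bound.

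The only mildly delicate step is picking the right reference optimum: using the closest optimum to $x^\dagger_\epsilon$ (rather than to $x$) is what makes the triangle inequality produce $dist(x^\dagger_\epsilon, \Omega_*)$ on the right-hand side and lets the final algebraic step close cleanly. Everything else is a routine use of convexity and the definition of $x^\dagger_\epsilon$ as a projection onto the sublevel set. I expect no significant obstacle, since the compactness and convexity hypotheses in Assumption~\ref{ass:1} guarantee that $z^*$ exists and the sublevel set is nonempty and closed.
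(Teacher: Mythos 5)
Your proof is correct, but it takes a genuinely different route from the paper's. The paper proves the lemma via the KKT conditions of the projection problem $\min_{z\in\mathcal S_\epsilon}\|z-x\|^2$: it introduces a Lagrange multiplier $\zeta$ for the constraint $F(z)\leq F_*+\epsilon$, tests the optimality inequality first at $z=x$ to get $F(x)-F(x^\dagger_\epsilon)\geq \|x-x^\dagger_\epsilon\|^2/\zeta$, and then at $z=x^*_\epsilon$ to upper-bound $\zeta$ by $dist(x^\dagger_\epsilon,\Omega_*)\|x^\dagger_\epsilon-x\|/\epsilon$. You instead avoid subgradients and multipliers entirely: you exhibit an explicit point $y_\alpha=\alpha x+(1-\alpha)z^*\in\mathcal S_\epsilon$ (which lies in $\Omega_1$ by convexity, and in $\mathcal S_\epsilon$ by Jensen with $\alpha=\epsilon/(F(x)-F_*)$), invoke the minimality of the projection to get $\|x-x^\dagger_\epsilon\|\leq(1-\alpha)\|x-z^*\|$, and close with the triangle inequality; the algebra $a\leq\frac{\Delta}{\Delta+\epsilon}(a+b)\Rightarrow a\leq\frac{\Delta}{\epsilon}b$ is exact. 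Your choice of $z^*$ as the optimum nearest to $x^\dagger_\epsilon$ (not to $x$) is indeed the step that makes the constant come out as $dist(x^\dagger_\epsilon,\Omega_*)$. What your argument buys is robustness: it uses only convexity of $F$ and $\Omega_1$, positive homogeneity of the norm, and the triangle inequality, so it transfers verbatim to an arbitrary norm — in particular it would give Lemma~\ref{lem:4} (the $p$-norm version) without the explicit gradient computation of $\frac{1}{2}\|\cdot\|_p^2$ that the paper's appendix carries out. What the paper's KKT route buys is a template that extends to settings where the feasible region is not the full sublevel set of a convex function or where one wants the multiplier $\zeta$ itself; for this lemma, however, your argument is the more elementary of the two.
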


The lemma is proved in~\citep{DBLP:journals/corr/arXiv:1512.03107}. We include its proof in Appendix. If we apply the above bound into~(\ref{eqn:cs}), we will see in the proof of the main theorem (Theorem~\ref{thm:GDr}) that the number of iterations $t$ for solving each smoothed problem   is roughly $O(\frac{dist(\mathcal L_\epsilon, \Omega_*)}{\epsilon})$, which will be lower than $O(\frac{1}{\epsilon})$  in light of the local error bound condition given below. 
\begin{definition}[Local error bound (LEB)]
A function $F(x)$ is said to satisfy a local error bound condition if there exist $\theta\in(0,1]$ and $c>0$ such that for any $x\in\mathcal S_\epsilon$
\begin{align}\label{eqn:leb}
dist(x, \Omega_*)\leq c(F(x) - F_*)^{\theta}
\end{align}
\end{definition}
{\bf Remark:}  In next subsection, we will discuss the relationship with other types of conditions and show that a broad family of non-smooth functions (including almost all commonly seen functions in machine learning) obey the local error bound condition. The exponent constant $\theta$ can be considered as a local sharpness measure of the function. Figure~\ref{fig2} illustrates the sharpness of $F(x) = |x|^p$ for $p=1, 1.5$, and  $2$ around the optimal solutions and their corresponding $\theta$.

With the local error bound condition, we can see that $dist(\mathcal L_\epsilon, \Omega_*)\leq c\epsilon^{\theta}, \theta\in(0,1]$. Now, we are ready to present the homotopy smoothing algorithm and its convergence guarantee under the local error bound condition.  The HOPS algorithm is presented in Algorithm~\ref{alg:2}, which starts from a relatively large smoothing parameter $\mu=\mu_1$ and gradually reduces $\mu$ by a factor of $b>1$ after running  a number $t$ of iterations of APG with warm-start. The iteration complexity of HOPS is established below. 

\begin{algorithm}[t]
\caption{Homotopy Smoothing (HOPS) for solving~(\ref{eqn:opt}) } \label{alg:2}
\begin{algorithmic}[1]
\STATE \textbf{Input}: the number of stages $m$ and  the  number of iterations $t$ per-stage, and the initial solution $x_0\in\Omega_1$ and a parameter $b>1$. 
\STATE Let $\mu_1 = \epsilon_0/(b D^2)$
\FOR{$s=1,\ldots, m$}
    \STATE Let $x_{s} = \text{APG}(x_{s-1}, t, L_{\mu_{s}})$ 
    \STATE Update $\mu_{s+1} = \mu_s/b$ 
   \ENDFOR
\STATE \textbf{Output}:  $x_m$
\end{algorithmic}
\end{algorithm}

\begin{figure*}[t]
\centering
\includegraphics[scale=0.35]{}
\caption{Illustration of local sharpness of three  functions and the corresponding $\theta$ in the LEB condition.}
\label{fig2}
\end{figure*}


\begin{thm}\label{thm:GDr}
Suppose Assumption~\ref{ass:1} holds and $F(x)$ obeys the local error bound condition.  Let HOPS run with $t=O(\frac{2bcD\|A\|}{\epsilon^{1-\theta}})\geq \frac{2bcD\|A\|}{\epsilon^{1-\theta}}$ iterations for each stage, and $m = \lceil \log_b(\frac{\epsilon_0}{\epsilon})\rceil$. Then  
\begin{align*}
F(x_m) - F_*\leq 2\epsilon. 
\end{align*}
Hence, the iteration complexity for achieving an $2\epsilon$-optimal solution is $\frac{2bcD\|A\|}{\epsilon^{1-\theta}}\lceil \log_b(\frac{\epsilon_0}{\epsilon})\rceil$ in the worst-case.
\end{thm}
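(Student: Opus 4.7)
My plan is to proceed by induction on the stage index $s$, proving that the warm-start iterates satisfy
\[
F(x_s) - F_* \;\le\; \epsilon_s + \epsilon, \qquad \epsilon_s := \epsilon_0/b^s.
\]
Since $m = \lceil \log_b(\epsilon_0/\epsilon)\rceil$ forces $\epsilon_m \le \epsilon$, this immediately yields $F(x_m) - F_* \le 2\epsilon$ at the end. The base case $s=0$ is exactly Assumption~\ref{ass:1}(i). The choice $\mu_s = \epsilon_0/(b^s D^2)$ gives $\mu_s D^2/2 = \epsilon_s/2$ and $L_{\mu_s} = \|A\|^2/\mu_s = \|A\|^2 D^2/\epsilon_s$, which I will use throughout.

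For the induction step, the natural idea is to invoke Corollary~\ref{cor:1} for the smoothed APG run at stage $s$ not against an arbitrary comparator, but against $x = (x_{s-1})_\epsilon^\dagger$, the closest point in the $\epsilon$-sublevel set to the warm-start $x_{s-1}$. By definition $F((x_{s-1})_\epsilon^\dagger) \le F_* + \epsilon$, so Corollary~\ref{cor:1} applied at stage $s$ yields
\[
F(x_s) - F_* - \epsilon \;\le\; \frac{\epsilon_s}{2} \;+\; \frac{2 L_{\mu_s}\,\|(x_{s-1})_\epsilon^\dagger - x_{s-1}\|^2}{t^2}.
\]
The goal is to bound the right-hand side by $\epsilon_s$, which by design closes the induction.

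The key step is to control $\|(x_{s-1})_\epsilon^\dagger - x_{s-1}\|$. If $x_{s-1} \in \mathcal{S}_\epsilon$ this distance is zero and we are trivially done; otherwise $(x_{s-1})_\epsilon^\dagger \in \mathcal{L}_\epsilon$, so Lemma~\ref{lem:1} applies and gives
\[
\|(x_{s-1})_\epsilon^\dagger - x_{s-1}\| \;\le\; \frac{dist((x_{s-1})_\epsilon^\dagger, \Omega_*)}{\epsilon}\bigl(F(x_{s-1}) - F((x_{s-1})_\epsilon^\dagger)\bigr).
\]
Now I feed in the LEB condition~(\ref{eqn:leb}) at the level-set point to get $dist((x_{s-1})_\epsilon^\dagger,\Omega_*) \le c\epsilon^\theta$, together with the induction hypothesis $F(x_{s-1}) - F_* \le \epsilon_{s-1} + \epsilon$ and the identity $F((x_{s-1})_\epsilon^\dagger) = F_* + \epsilon$, which combine to give $F(x_{s-1}) - F((x_{s-1})_\epsilon^\dagger) \le \epsilon_{s-1}$. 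This yields the crisp estimate $\|(x_{s-1})_\epsilon^\dagger - x_{s-1}\| \le c\,\epsilon^{\theta - 1}\epsilon_{s-1}$.

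Plugging this back, the error term in the APG bound becomes
\[
\frac{2 L_{\mu_s}\,c^2 \epsilon^{2\theta-2}\epsilon_{s-1}^2}{t^2} \;=\; \frac{2\|A\|^2 D^2 c^2 \epsilon^{2\theta-2}\epsilon_{s-1}^2}{\epsilon_s\, t^2},
\]
and a direct calculation using $\epsilon_{s-1}/\epsilon_s = b$ shows that the choice $t \ge 2bcD\|A\|/\epsilon^{1-\theta}$ makes this at most $\epsilon_s/2$, completing the induction. Multiplying $t$ by the number of stages $m$ then gives the claimed worst-case complexity.

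\textbf{Main obstacle.} The one subtle point is the two-case split around $x_{s-1} \in \mathcal{S}_\epsilon$ versus $x_{s-1} \notin \mathcal{S}_\epsilon$: Lemma~\ref{lem:1} needs the second case to invoke $(x_{s-1})_\epsilon^\dagger \in \mathcal{L}_\epsilon$ so the LEB bound applies at a point where $F$ equals $F_*+\epsilon$. Handling this cleanly (and checking that $F((x_{s-1})_\epsilon^\dagger) \le F_* + \epsilon$ suffices in the first case) is the delicate bookkeeping; once past it, everything reduces to the geometric-decrease calculation above.
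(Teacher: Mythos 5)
Your proposal is correct and follows essentially the same route as the paper's proof: the same induction hypothesis $F(x_s)-F_*\le \epsilon_s+\epsilon$, the same use of Corollary~\ref{cor:1} against the comparator $x_{s-1,\epsilon}^\dagger$, the same two-case split on whether $x_{s-1}\in\mathcal S_\epsilon$, and the same combination of Lemma~\ref{lem:1} with the LEB condition to bound $\|x_{s-1}-x_{s-1,\epsilon}^\dagger\|\le c\,\epsilon_{s-1}/\epsilon^{1-\theta}$. The final geometric-decrease calculation matches the paper's as well.
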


\begin{proof}
Let $x_{s,\epsilon}^\dag$ denote the closest point to $x_s$ in the $\epsilon$ sublevel set.  Define  $\epsilon_s \triangleq\frac{\epsilon_0}{b^s}$. Note that $\mu_s = \epsilon_s/D^2$. We will show by induction that $F(x_s) - F_*\leq \epsilon_s +\epsilon$ for $s=0,1,\dots$ which leads to our conclusion when $s=m$. 
The inequality holds obviously for $s=0$. Assuming $F(x_{s-1}) - F_*\leq \epsilon_{s-1} + \epsilon$, we  need to show that $F(x_s) - F_*\leq \epsilon_s +\epsilon$. We apply Corollary \ref{cor:1} to the $s$-th epoch of Algorithm \ref{alg:2} and get
\begin{align}\label{eqn:th5-1}
F(x_s) - F(x_{s-1, \epsilon}^\dagger) \leq \frac{D^2\mu_s}{2} + \frac{2\|A\|^2\|x_{s-1} - x_{s-1,\epsilon}^\dagger\|^2}{\mu_s t^2}
\end{align}
First, we assume $F(x_{s-1}) - F_* \leq \epsilon$, i.e. $x_{s-1} \in \mathcal S_{\epsilon}$. Then we have $x_{s-1, \epsilon}^\dagger = x_{s-1}$ and 
\begin{align*}
F(x_s) - F(x_{s-1, \epsilon}^\dagger) \leq \frac{D^2\mu_s}{2} \leq \frac{\epsilon_s}{2}
\end{align*}
As a result, 
\begin{align*}
F(x_s) - F_* \leq F(x_{s-1, \epsilon}^\dagger) - F_* + \frac{\epsilon_s}{2} \leq \epsilon + \epsilon_s
\end{align*}
Next, we consider $F(x_{s-1}) - F_* > \epsilon$, i.e. $x_{s-1} \notin \S_{\epsilon}$. Then we have $F(x_{s-1, \epsilon}^\dagger)  - F_* = \epsilon$.
By Lemma 1, we have
\begin{align}\label{eqn:th5-2}
\nonumber \|x_{s-1} - x^\dagger_{s-1,\epsilon}\| &\leq \frac{dist(x^\dagger_{s-1,\epsilon}, \Omega_*)}{\epsilon} (F(x_{s-1}) - F(x_{s-1,\epsilon}^\dagger))\\
\nonumber & \leq \frac{dist(x^\dagger_{s-1,\epsilon}, \Omega_*)}{\epsilon} [\epsilon_{s-1}+\epsilon - \epsilon] = \frac{dist(x^\dagger_{s-1,\epsilon}, \Omega_*)\epsilon_{s-1}}{\epsilon} \\
\nonumber & \leq \frac{c(F(x^\dagger_{s-1,\epsilon}) - F_*)^\theta \epsilon_{s-1}}{\epsilon} \\
& \leq \frac{c(\epsilon)^\theta \epsilon_{s-1}}{\epsilon} =  \frac{c \epsilon_{s-1}}{\epsilon^{1-\theta}}
\end{align}
Combining (\ref{eqn:th5-1}) and (\ref{eqn:th5-2}) and using the fact that $\displaystyle \mu_s = \frac{\epsilon_s}{D^2}$  and $\displaystyle t \geq \frac{2bc D\|A\|}{\epsilon^{1-\theta}}$, we have 
\begin{align*}
&F(x_s) - F(x_{s-1, \epsilon}^\dagger) \leq   \frac{\epsilon_s}{2} + \frac{\epsilon_{s-1}^2}{2\epsilon_s b^2} = \epsilon_s 
\end{align*}
which together with the fact that $ F(x^\dagger_{s-1,\epsilon})=F_*+\epsilon$ implies
\begin{align*}
F(x_s) - F_*& \leq \epsilon  + \epsilon_s
\end{align*}
Therefore by induction, we have
\[
F(x_m) - F_*\leq \epsilon_m + \epsilon = \frac{\epsilon_0}{b^m} +\epsilon\leq 2\epsilon
\]
where the last inequality is due to the value of $m$. 


\end{proof}


\subsection{Local Error Bounds and Applications}
In this subsection, we discuss the local error bound condition and its application in non-smooth optimization problems. 
\paragraph{The Hoffman's bound and finding a point in a polyhedron.}
A polyhedron can be expressed as $\mathcal P =\{x\in\R^d; B_1x\leq b_1, B_2x=b_2\}$. The Hoffman's bound~\cite{DBLP:journals/mp/Pang97} is expressed as 
\begin{align}\label{eqn:hoff}
dist(x, \mathcal P) \leq c(\|(B_1x - b_1)_+\| + \|B_2x - b_2\|), \exists c>0
\end{align}
where $[s]_+=\max(0,s)$. This can be considered as the error bound for the polyhedron feasibility problem, i.e., finding a $x\in\mathcal P$, which is equivalent to 
\begin{align*}
\min_{x\in\R^d}F(x)\triangleq \left[\|(B_1x - b_1)_+\| + \|B_2x - b_2\| = \max_{u \in \Omega_2} \langle B_1x-b_1, u_1\rangle +  \langle B_2x-b_2, u_2\rangle \right]
\end{align*}
where $u = (u_1^\top, u_2^\top)^\top$ and $\Omega_2 = \{ u | u_1 \succeq 0, \|u_1\|\leq 1, \|u_2\|\leq 1 \}$. If there exists a $x\in\mathcal P$, then $F_*=0$. Thus the Hoffman's bound in~(\ref{eqn:hoff}) implies a local error bound~(\ref{eqn:leb}) with $\theta=1$. Therefore, the HOPS has a linear convergence for finding a feasible solution in a polyhedron. 
If we let $\omega_+(u) = \frac{1}{2}\|u\|^2$ then $D^2 = 2$ so that the iteration complexity is $2\sqrt{2}bc \max(\|B_1\|,\|B_2\|)\lceil \log_b(\frac{\epsilon_0}{\epsilon})\rceil$.

\paragraph{Cone programming.} Let $U, V$ denote two vector spaces. Given a linear opearator $\mathcal E: U\rightarrow V^*$~\footnote[4]{$V^*$ represents the dual space of $V$. The notations and descriptions are adopted from~\citep{DBLP:journals/mp/LanLM11}.}, a closed convex set $\Omega \subseteq U$, and  a vector $e\in V^*$, and a closed convex cone $\mathcal K\subseteq V$, the general constrained cone linear system (cone programing) consists of finding a vector $x\in\Omega$ such that $\mathcal Ex - e \in\mathcal K^*$. Lan et al.~\cite{DBLP:journals/mp/LanLM11} have considered  Nesterov's smoothing algorithm for solving the cone programming problem with $O(1/\epsilon)$ iteration complexity. The problem can be cast into a non-smooth optimization problem: 
\begin{align*}
\min_{x\in\Omega} F(x)\triangleq \left[dist(\mathcal Ex - e, \mathcal K^*) = \max_{\|u\|\leq 1, u\in -\mathcal K}\langle \mathcal Ex -e, u\rangle\right]
\end{align*}
Assume that $e\in Range(\mathcal E) -\mathcal K^*$, then $F_*=0$.  Burke et al.~\cite{DBLP:journals/siamjo/BurkeT96} have considered the error bound for such problems and their results imply that there exists $c>0$ such that $dist(x,\Omega_*)\leq c (F(x) - F_*)$ as long as $\exists x\in\Omega, \text{s.t. } \mathcal Ex - e \in int(\mathcal K^*)$, where $\Omega_*$ denotes the optimal solution set. Therefore, the HOPS also has a linear convergence for cone programming. 
{Considering that both $U$ and $V$ are Euclidean spaces, we set $\omega_+(u) = \frac{1}{2}\|u\|^2$ then $D^2 = 1$. Thus, the iteraction complexity of HOPS for finding an $2\epsilon$-solution is $2bc \|\mathcal E\|\lceil \log_b(\frac{\epsilon_0}{\epsilon})\rceil$.}

\paragraph{Non-smooth regularized empirical loss (REL) minimization in Machine Learning}
The REL consists of a sum of loss functions on the training data and a regularizer, i.e., 
\begin{align*}
\min_{x\in\R^d}F(x)\triangleq \frac{1}{n}\sum_{i=1}^n\ell(x^{\top}a_i, y_i) + \lambda g(x)
\end{align*}
where $(a_i, y_i), i=1,\ldots, n$ denote pairs of a feature vector and a label of  training data. Non-smooth loss functions include hinge loss $\ell(z,y)=\max(0,1-yz)$, absolute loss $\ell(z,y)=|z-y|$, which can be written as the max structure in~(\ref{eqn:g}). Non-smooth regularizers include e.g., $g(x) = \|x\|_1$, $g(x)=\|x\|_\infty$. These loss functions and regularizers are essentially piecewise linear functions, whose epigraph is a polyhedron. The error bound condition has been developed for such kind of problems~\citep{DBLP:journals/corr/arXiv:1512.03107}. In particular, if $F(x)$ has a  polyhedral epigraph, then there exists $c>0$ such that $dist(x, \Omega_*)\leq c (F(x) - F_*)$ for any $x\in\R^d$. It then implies  HOPS has an $O(\log(\epsilon_0/\epsilon))$ iteration complexity for solving a non-smooth REL minimization with a polyhedral epigraph. \cite{DBLP:journals/ml/YangMJZ14Non} has also considered such non-smooth problems, but they only have $O(1/\epsilon)$ iteration complexity.

%
\paragraph{When $F(x)$ is essentially locally strongly convex~\citep{goebel2008local}}in terms of $\|\cdot\|$  such that~\footnote[5]{This is true if $g(x)$ is strongly convex or locally strongly convex.} 
\begin{align}\label{eqn:st}
dist^2(x, \Omega_*)\leq \frac{2}{\sigma}(F(x) - F_*), \forall x\in\mathcal S_\epsilon
\end{align}
then we can see that the local error bound holds with $\theta={1/2}$, which implies the iteration complexity of HOPS is $\widetilde O(\frac{1}{\sqrt{\epsilon}})$, which is up to a logarithmic factor the same as the result in~\citep{Chambolle:2011:FPA:1968993.1969036} for a strongly convex function. However, here only local strong convexity is sufficient and there is no need to develop  a different algorithm and different analysis from the non-strongly convex case as done in~\citep{Chambolle:2011:FPA:1968993.1969036}. 
For example, one can consider $F(x) = \|Ax-y\|_p^p = \sum_{i=1}^{n}| a_i^\top x - y_i|^p, p\in(1,2)$, which satisfies~(\ref{eqn:st}) according to \citep{DBLP:journals/corr/arXiv:1512.03107}.

\paragraph{The Kurdyka-\L ojasiewicz (KL) property.}
The definition of KL property is given below. 
 \begin{definition}
 Let's define the subdifferential of $F$ at $x$ as $\partial F(x) = \{ u: F(y) \geq F(x) + \langle u, y-x\rangle \text{ for all } y \in \Omega_1\}$. The function $F(x)$ is said to have the KL property at $x_*\in\Omega_*$ if there exist $\eta\in(0,\infty]$, a neighborhood $U$ of $x_*$ and a continuous concave function $\varphi:[0, \eta) \rightarrow \R_+$ such that i) $\varphi(0) = 0$,  $\varphi$ is continuous on $(0, \eta)$, ii) for all $s\in(0, \eta)$, $\varphi'(s)>0$, iii) and for all $x\in U\cup \{x: F(x_*)< F(x)< F(x_*)+\eta\}$, the KL inequality 
 $\varphi'(F(x) - F(x_*))\|\partial F(x)\|\geq 1$ holds. 
 \end{definition}
 The function $\varphi$ is called the desingularizing function of $F$ at $x_*$, which makes  the function $F(x)$  sharp by reparameterization. An important desingularizing function is in the form of $\varphi(s) = cs^{1-\beta}$ for some $c>0$ and $\beta\in[0,1)$, which gives the KL inequality $\|\partial F(x)\|\geq  \frac{1}{c(1-\beta)}(F(x) - F(x_*))^{\beta}$.  It has been established  that the KL property  is satisfied by a wide class of non-smooth functions  definable in an o-minimal structure~\citep{Bolte:2006:LIN:1328019.1328299}. Semialgebraic functions and (globally) subanalytic functions are for instance definable in their respective classes. While the definition of KL property involves a neighborhood $U$ and a constant  $\eta$, in practice  many convex functions satisfy the above property with $U =\R^d$ and $\eta=\infty$~\citep{Attouch:2010:PAM:1836121.1836131}. The proposition below shows that a function with the KL property with a desingularizing function $\varphi(s)=cs^{1-\beta}$ obeys the local error bound condition in~(\ref{eqn:leb}) with $\theta = 1-\beta\in(0,1]$, which implies an iteration complexity of $\widetilde O(1/\epsilon^{\theta})$ of HOPS for optimizing such a function. 
\begin{prop}\label{prop:KL}\citep[Theorem 5]{arxiv:1510.08234}
Let $F(x)$ be a proper, convex and lower-semicontinuous function that satisfies KL property at $x_*$ and $U$ be a neighborhood of $x_*$. For all $x\in U\cap \{x: F(x_*)< F(x)< F(x_*)+\eta\}$, if $\|\partial F(x)\|\geq  \frac{1}{c(1-\beta)}(F(x) - F(x_*))^{\beta}$, then $dist(x, \Omega_*)\leq c(F(x) - F_*)^{1-\beta}$. 
\end{prop}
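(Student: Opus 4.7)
The plan is to derive the local error bound from the KL inequality by bounding the distance to $\Omega_*$ by the length of a descent curve of $F$, and then bounding that length by the desingularizing function $\varphi(s) = cs^{1-\beta}$. The motivation is that $dist(x,\Omega_*) \leq \|x - x_\infty\|$ for any $x_\infty \in \Omega_*$, and $\|x - x_\infty\|$ is itself at most the length of any absolutely continuous curve from $x$ to $x_\infty$. So it suffices to exhibit such a curve of length at most $c(F(x)-F_*)^{1-\beta}$.

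First I would invoke the classical theory of maximal monotone operators for proper convex lower-semicontinuous functions (Brezis) to define the subgradient flow $\gamma:[0,\infty)\to\Omega_1$ with $\gamma(0)=x$ and $\dot\gamma(t)\in -\partial^0 F(\gamma(t))$, where $\partial^0 F$ selects the minimum-norm subgradient. Along this curve, $F(\gamma(t))$ is nonincreasing with $\tfrac{d}{dt}F(\gamma(t)) = -\|\dot\gamma(t)\|^2 = -\|\partial^0 F(\gamma(t))\|^2$, and $\gamma(t)$ converges to some limit $x_\infty\in\Omega_*$ (convexity guarantees convergence to a minimizer when one exists).

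Next I would chain-rule differentiate $\varphi\circ(F(\gamma(\cdot))-F_*)$ and apply the KL inequality to show the length of $\gamma$ is controlled by $\varphi$. Concretely,
\[
\frac{d}{dt}\varphi(F(\gamma(t))-F_*) \;=\; -\varphi'(F(\gamma(t))-F_*)\,\|\partial^0 F(\gamma(t))\|^2,
\]
and the hypothesis $\varphi'(F(\gamma(t))-F_*)\,\|\partial^0 F(\gamma(t))\|\geq 1$ yields
\[
\frac{d}{dt}\varphi(F(\gamma(t))-F_*) \;\leq\; -\|\partial^0 F(\gamma(t))\| \;=\; -\|\dot\gamma(t)\|.
\]
Integrating from $0$ to $\infty$ and using $\varphi\ge 0$ gives $\int_0^\infty \|\dot\gamma(t)\|\,dt \le \varphi(F(x)-F_*) = c(F(x)-F_*)^{1-\beta}$. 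The triangle inequality then bounds $\|x - x_\infty\|$ by the same quantity, so $dist(x,\Omega_*)\leq \|x-x_\infty\|\leq c(F(x)-F_*)^{1-\beta}$.

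The main obstacle is rigor in two places: (i) existence, uniqueness, and convergence of the subgradient trajectory to a point of $\Omega_*$, and (ii) ensuring the KL inequality $\varphi'(F(\gamma(t))-F_*)\|\partial^0 F(\gamma(t))\|\geq 1$ holds for every $t\ge 0$, not just at the starting point $x$. Point (i) is the classical Brezis theory. Point (ii) is resolved by observing that $F(\gamma(t))$ is monotonically nonincreasing and a standard convex Lyapunov estimate keeps $\|\gamma(t)-x_*\|$ controlled, so $\gamma(t)$ never leaves the slice $U\cap\{x:F_*<F(x)<F_*+\eta\}$ on which the hypothesized KL inequality is in force. If one prefers to avoid differential equations, the same argument goes through along a proximal-point sequence, with integrals replaced by telescoping sums and the same desingularizer $\varphi(s)=cs^{1-\beta}$ yielding the identical bound.
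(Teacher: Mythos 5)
The paper does not actually prove this proposition; it is quoted verbatim as Theorem 5 of the cited reference (arXiv:1510.08234, Bolte et al.), so there is no in-paper proof to compare against. Your subgradient-flow argument is precisely the standard proof of that cited result: bound $dist(x,\Omega_*)$ by the length of the steepest-descent trajectory $\gamma$, and bound that length by $\varphi(F(x)-F_*)$ via the Lyapunov computation $\frac{d}{dt}\varphi(F(\gamma(t))-F_*)\le -\|\dot\gamma(t)\|$, which is exactly where the KL inequality $\varphi'(\cdot)\,\|\partial^0 F\|\ge 1$ enters. The computation is correct (note that $\|\partial F(x)\|$ must be read as the minimal-norm subgradient, which is what you do and what the flow velocity equals), and the existence/convergence facts you invoke are the classical Brezis--Bruck theory, which holds here since $\Omega_*$ is nonempty and we are in finite dimensions.

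The one point deserving more care than your sketch gives it is keeping $\gamma(t)$ inside the set where the KL inequality is assumed. Monotonicity of $F(\gamma(t))$ keeps the trajectory in $\{F_*<F<F_*+\eta\}$ (or lands it in $\Omega_*$, in which case you stop early), and Fej\'er monotonicity gives $\|\gamma(t)-z\|\le\|x-z\|$ for every $z\in\Omega_*$, hence $dist(\gamma(t),\Omega_*)\le dist(x,\Omega_*)$; but this does not by itself confine $\gamma$ to an arbitrary neighborhood $U$ of the single point $x_*$. One either takes $U$ to contain a full sublevel slice $\{F<F_*+\eta'\}$, or shrinks to a ball $B(x_*,r)\subseteq U$ and restricts the conclusion to starting points in $B(x_*,r)$. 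This is a known wrinkle in the localized statement rather than a flaw specific to your argument, and the paper's own remark following the proposition (assuming $\mathcal S_\epsilon\subset\bigcup_{x_*}U_{x_*}$, and in practice $U=\R^d$, $\eta=\infty$) dissolves it.
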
 
{\bf Remark:} In order to apply the KL property to our method, we usually need to assume the KL property is satisfied at every $x_*$ with $U$ containing $\mathcal S_\epsilon$ in Proposition \ref{prop:KL}, i.e. assume $\mathcal S_\epsilon \subset \bigcup_{x_* \in \Omega_* } U_{x_*}$, where $U_{x_*}$ is the neighborhood $U$ of a particular $x_*$. However, as we mentioned above, in practice many convex functions satisfy the KL property with $U =\R^d$ and $\eta=\infty$~\citep{Attouch:2010:PAM:1836121.1836131} so that above assumption holds.

\subsection{HOPS for a smooth $g(x)$} \label{smoothHOPS}
In the preliminaries section, we assume that $g(z)$ is simple enough such that the proximal mapping defined below is easy to compute:
\begin{align}\label{prox:g}
P_{\lambda g}(x) = \min_{z\in\Omega_1} \frac{1}{2}\|z - x\|^2 + \lambda g(z)
\end{align}
We claimed that if $g(z)$ is smooth, this assumption can be relaxed. In this section, we present the discussion and result for a smooth function $g(x)$ without assuming that its proximal mapping is easy to compute.  In particular, we will consider $ g$ as a smooth component in $f_\mu + g$ and use the gradient of both $f_\mu$ and $g$ in the updating.  The detailed updates are presented in Algorithm~\ref{alg:APG-smooth}, where
\begin{align}\label{eqn:prox_smooth}
\widetilde{\Pi}^c_{u}(x) = \arg\min_{z\in\Omega_1} \langle u, z\rangle + \frac{c}{2}\|z - x\|^2
\end{align}

\begin{algorithm}[t]
\caption{An Accelerated Proximal Gradient Method ($g$ is smooth): $\text{APG}(x_0, t, L_\mu)$} \label{alg:APG-smooth}
\begin{algorithmic}[1]
\STATE \textbf{Input}: the  number of iterations $t$, the initial solution $x_0$, and the smoothness constant $L_\mu$
\STATE Let $\theta_0=1$, $U_{-1}=0$, $z_0 = x_0$
\STATE Let $\alpha_k$ and $\theta_k$ be two sequences given in Theorem 2. 
\FOR{$k=0,\ldots, t-1$}
    \STATE Compute $y_{k} = (1-\theta_k)x_k + \theta_k z_k$
    \STATE Compute $u_k=\nabla f_\mu(y_k) + \nabla g(y_k)$, $U_k =U_{k-1} + \frac{u_k}{\alpha_k}$
     \STATE Compute $z_{k+1}=\widetilde{\Pi}^{(L_\mu+M)/\sigma_1}_{U_k}(x_0)$ and $x_{k+1} = \widetilde{\Pi}^{L_\mu+M}_{u_k}(y_k)$
   \ENDFOR
\STATE \textbf{Output}:  $x_t$
\end{algorithmic}
\end{algorithm}

To present the convergence guarantee, we assume that the function $g$ is $M$-smooth w.r.t $\|x\|$, then the smoothness parameter of objective function $F_\mu(x) = f_\mu(x) + g(x)$ is
\begin{align}
L = L_\mu + M = \frac{\| A \|^2}{\mu} + M
\end{align}
Then, we state the convergence result of Algorithm~\ref{alg:APG-smooth} in the following corollary.
\begin{cor}\label{cor:smoothAPG}
Let $\theta_k  = \frac{2}{k+2}$, $\alpha_k = \frac{2}{k+1}, k\geq 0$ or $\alpha_{k+1} = \theta_{k+1} = \frac{\sqrt{\theta_k^4+4\theta_k^2} - \theta_k^2}{2}, k\geq 0$.  For any $x\in\Omega_1$, we have
\begin{align}
F(x_t)- F(x)\leq  \frac{\mu D^2}{2} + \frac{2\| A \|^2\|x - x_0\|^2}{\mu t^2} + \frac{2 M \|x - x_0\|^2}{t^2}
\end{align}
\end{cor}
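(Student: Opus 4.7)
The plan is to reduce this to Theorem~\ref{thm:APG} by viewing $g$ as being absorbed into the smooth part of the objective, and then convert the bound on $F_\mu$ back to a bound on $F$ using Nesterov's approximation inequality $f_\mu(x)\le f(x)\le f_\mu(x)+\mu D^2/2$.

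First, I would observe that Algorithm~\ref{alg:APG-smooth} is exactly the instantiation of Algorithm~\ref{alg:0} in which the ``non-smooth'' component is taken to be identically zero and the whole of $f_\mu+g$ is treated as a single smooth function. Indeed, step~6 computes $u_k=\nabla f_\mu(y_k)+\nabla g(y_k)=\nabla F_\mu(y_k)$, and the updates $\widetilde{\Pi}^{c}_{u}(x)$ in~(\ref{eqn:prox_smooth}) coincide with $\Pi^{c}_{v,\lambda g}(x)$ in~(\ref{eqn:prox}) when the $\lambda g$ term is absent. Since $f_\mu$ is $L_\mu$-smooth and $g$ is $M$-smooth (both with respect to $\|\cdot\|$), the sum $F_\mu=f_\mu+g$ is $(L_\mu+M)$-smooth with respect to $\|\cdot\|$.

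Next, applying Theorem~\ref{thm:APG} to $F_\mu$ with smoothness constant $L=L_\mu+M=\|A\|^2/\mu+M$ yields, for any $x\in\Omega_1$,
\begin{align*}
F_\mu(x_t)-F_\mu(x)\;\le\;\frac{2(L_\mu+M)\|x-x_0\|^2}{t^2}
\;=\;\frac{2\|A\|^2\|x-x_0\|^2}{\mu t^2}+\frac{2M\|x-x_0\|^2}{t^2}.
\end{align*}
Finally, I would use~(\ref{eqn:approx}): since $F(x_t)=f(x_t)+g(x_t)\le f_\mu(x_t)+\mu D^2/2+g(x_t)=F_\mu(x_t)+\mu D^2/2$ and $F_\mu(x)\le F(x)$, adding $\mu D^2/2$ to both sides of the preceding inequality and chaining the two approximation bounds gives
\begin{align*}
F(x_t)-F(x)\;\le\; F_\mu(x_t)+\frac{\mu D^2}{2}-F_\mu(x)\;\le\;\frac{\mu D^2}{2}+\frac{2\|A\|^2\|x-x_0\|^2}{\mu t^2}+\frac{2M\|x-x_0\|^2}{t^2},
\end{align*}
which is the claimed bound.

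There is no real obstacle; the only thing to verify carefully is that Theorem~\ref{thm:APG} continues to apply when the ``non-smooth'' component is taken to be zero and the smoothness constant is inflated from $L_\mu$ to $L_\mu+M$. This is immediate from the proof of Theorem~\ref{thm:APG}, which only uses the smoothness of the function whose gradient is queried at each iterate $y_k$ (here $F_\mu$) together with the strong convexity of $\tfrac12\|x-x_0\|^2$; the choices of $\theta_k$ and $\alpha_k$ are unchanged.
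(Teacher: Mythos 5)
Your proof is correct and is essentially the argument the paper intends: the corollary is stated immediately after the paper observes that $F_\mu=f_\mu+g$ has smoothness parameter $L=L_\mu+M$, so the intended route is precisely to apply Theorem~\ref{thm:APG} to the $(L_\mu+M)$-smooth function $F_\mu$ (with the proximal term absent) and then chain with the approximation inequality~(\ref{eqn:approx}). Your careful check that $\widetilde{\Pi}^c_u$ coincides with $\Pi^c_{v,\lambda g}$ when the $\lambda g$ term is dropped is the right sanity check and introduces no gap.
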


{\bf Remark:} In order to have $F(x_t)\leq F(x_*) + \epsilon$,  we can consider $x = x_*$ in Corollary~\ref{cor:smoothAPG}, i.e.
\begin{align}\label{eq:APGupbound}
F(x_t)- F(x_*)\leq  \frac{\mu D^2}{2} + \frac{2\| A \|^2\|x_* - x_0\|^2}{\mu t^2} + \frac{2 M \|x_* - x_0\|^2}{t^2}
\end{align}
In particular, we set 
\begin{align*}
\mu = \frac{2\epsilon}{3D^2} 
\end{align*}
and
\begin{align*}
t \geq \max\left\{ \frac{3D \|A\| \|x_* -x_0\|}{\epsilon}, \frac{\sqrt{6M}\|x_* -x_0\|}{\sqrt{\epsilon}}\right\}
\end{align*}
Algorithm \ref{alg:APG-smooth} also achieves the iteration complecity of $O(1/\epsilon)$.

Similarly, we can develop the HOPS algorithm and present it in Algorithm~\ref{alg:HOPS-smooth}. The iteration complexity of HOPS is established in Theorem \ref{thm:HOPS-smooth}.
\begin{algorithm}[t]
\caption{Homotopy Smoothing (HOPS) for solving (1) ($g$ is smooth)} \label{alg:HOPS-smooth}
\begin{algorithmic}[1]
\STATE \textbf{Input}: the number of stages $m$ and  the  number of iterations $t$ per-stage, and the initial solution $x_0\in\Omega_1$ and a parameter $b>1$. 
\STATE Let $\mu_1 = \frac{2 \epsilon_0}{3bD^2}$
\FOR{$s=1,\ldots, m$}
    \STATE Let $x_{s} = \text{APG}(x_{s-1}, t, L_{\mu_{s}})$ 
    \STATE Update $\mu_{s+1} = \mu_s/b$ 
   \ENDFOR
\STATE \textbf{Output}:  $x_m$
\end{algorithmic}
\end{algorithm}

\begin{thm}\label{thm:HOPS-smooth}
Suppose Assumption 1 holds and $F(x)$ obeys the local error bound condition.  Let HOPS run with $t =O(1/\epsilon^{1-\theta})\geq \max\left\{ \frac{3D \|A\| bc}{\epsilon^{1-\theta}}, \frac{\sqrt{6M\epsilon_s}bc}{\epsilon^{1-\theta}}\right\}$  iterations for each stage, and $m = \lceil \log_b(\frac{\epsilon_0}{\epsilon})\rceil$. Then  
\begin{align*}
F(x_m) - F_*\leq 2\epsilon. 
\end{align*}
Hence, the iteration complexity for achieving an $2\epsilon$-optimal solution is $\widetilde O(1/\epsilon^{1-\theta})$. 
\end{thm}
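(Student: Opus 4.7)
I would follow the same inductive template as the proof of Theorem~\ref{thm:GDr}, but with the three-term convergence bound of Corollary~\ref{cor:smoothAPG} in place of the two-term bound of Corollary~\ref{cor:1}. Specifically, define $\epsilon_s = \epsilon_0/b^s$, so that the algorithm's choice $\mu_1 = 2\epsilon_0/(3bD^2)$ and $\mu_{s+1}=\mu_s/b$ gives $\mu_s = 2\epsilon_s/(3D^2)$. I would prove by induction on $s$ that $F(x_s) - F_* \le \epsilon_s + \epsilon$, with the base case $s=0$ following from Assumption~\ref{ass:1}(i), and the desired conclusion following at $s=m$ since $\epsilon_m = \epsilon_0/b^m \le \epsilon$ by the definition of $m$.

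For the inductive step, I apply Corollary~\ref{cor:smoothAPG} to the $s$-th call of $\text{APG}$ with the comparator point taken to be $x^\dag_{s-1,\epsilon}$, the closest point in $\mathcal{S}_\epsilon$ to $x_{s-1}$. This yields
\[
F(x_s) - F(x^\dag_{s-1,\epsilon}) \le \frac{\mu_s D^2}{2} + \frac{2\|A\|^2 \|x_{s-1}-x^\dag_{s-1,\epsilon}\|^2}{\mu_s t^2} + \frac{2M\|x_{s-1}-x^\dag_{s-1,\epsilon}\|^2}{t^2}.
\]
I then split on whether $x_{s-1}\in\mathcal{S}_\epsilon$. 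In the easy case $x_{s-1}\in\mathcal{S}_\epsilon$ the middle and last terms vanish because $x^\dag_{s-1,\epsilon}=x_{s-1}$, and the first term is at most $\epsilon_s/3 \le \epsilon_s$, so $F(x_s)\le F(x_{s-1})+\epsilon_s \le F_* + \epsilon + \epsilon_s$. In the non-trivial case $x_{s-1}\notin\mathcal{S}_\epsilon$, we have $x^\dag_{s-1,\epsilon}\in\mathcal{L}_\epsilon$ and, exactly as in~(\ref{eqn:th5-2}), Lemma~\ref{lem:1} combined with the inductive hypothesis $F(x_{s-1})-F_*\le\epsilon_{s-1}+\epsilon$ and the local error bound applied to $x^\dag_{s-1,\epsilon}$ gives
\[
\|x_{s-1}-x^\dag_{s-1,\epsilon}\| \le \frac{c(F(x^\dag_{s-1,\epsilon})-F_*)^\theta\,\epsilon_{s-1}}{\epsilon} \le \frac{c\,\epsilon_{s-1}}{\epsilon^{1-\theta}} = \frac{bc\,\epsilon_s}{\epsilon^{1-\theta}}.
\]

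The key balancing step is to use the choice $\mu_s=2\epsilon_s/(3D^2)$ and the stage length $t$ to force each of the three terms in the displayed APG bound to be at most $\epsilon_s/3$. The first term equals $\epsilon_s/3$ by construction. Substituting the bound on $\|x_{s-1}-x^\dag_{s-1,\epsilon}\|$ into the second term and requiring it to be at most $\epsilon_s/3$ reduces (after the $\epsilon_s$'s cancel) to $t\ge 3Dbc\|A\|/\epsilon^{1-\theta}$; doing the same for the third term yields $t\ge \sqrt{6M\epsilon_s}\,bc/\epsilon^{1-\theta}$, which is controlled uniformly by the stated requirement on $t$ (using $\epsilon_s\le\epsilon_0$ if one wants a single number). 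Together these give $F(x_s)-F(x^\dag_{s-1,\epsilon})\le\epsilon_s$, and since $F(x^\dag_{s-1,\epsilon})=F_*+\epsilon$ in this case, the induction closes with $F(x_s)-F_*\le\epsilon+\epsilon_s$. Applying the induction up to $s=m$ gives the final bound $F(x_m)-F_*\le\epsilon_m+\epsilon\le 2\epsilon$, and the total complexity is $m\cdot t = \widetilde{O}(1/\epsilon^{1-\theta})$.

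The main obstacle is purely bookkeeping: relative to Theorem~\ref{thm:GDr}, the extra smooth term $g$ introduces a third summand in the APG guarantee that is independent of $\mu_s$, so the old balance between smoothing error and optimization error becomes a three-way split. This is why the algorithm uses $\mu_s=2\epsilon_s/(3D^2)$ rather than $\epsilon_s/D^2$, and why two separate lower bounds on $t$ appear in the hypothesis. Once the balance is set up correctly, the induction itself is mechanical and mirrors the proof of Theorem~\ref{thm:GDr}.
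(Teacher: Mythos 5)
Your proposal is correct and follows essentially the same route as the paper's proof: the same induction on $F(x_s)-F_*\le\epsilon_s+\epsilon$, the same case split on whether $x_{s-1}\in\mathcal S_\epsilon$, the same application of Lemma~\ref{lem:1} plus the local error bound to get $\|x_{s-1}-x^\dag_{s-1,\epsilon}\|\le c\epsilon_{s-1}/\epsilon^{1-\theta}$, and the same three-way balance of the terms in Corollary~\ref{cor:smoothAPG} at $\epsilon_s/3$ each, which is exactly where the two lower bounds on $t$ come from. The only cosmetic difference is that the paper keeps the stage-dependent $\sqrt{6M\epsilon_s}$ requirement and sums the per-stage costs via a geometric series at the end, whereas you note one may uniformize with $\epsilon_s\le\epsilon_0$; both yield the stated $\widetilde O(1/\epsilon^{1-\theta})$ complexity.
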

\begin{proof}
Let $x_{s,\epsilon}^\dag$ denote the closest point to $x_s$ in the $\epsilon$ sublevel set and define  $\epsilon_s \triangleq\frac{\epsilon_0}{b^s}$. We will show by induction that $F(x_s) - F_*\leq \epsilon_s +\epsilon$ for $s=0,1,\dots$ which leads to our conclusion when $s=m$. 
The inequality holds obviously for $s=0$. Assuming $F(x_{s-1}) - F_*\leq \epsilon_{s-1} + \epsilon$, we  need to show that $F(x_s) - F_*\leq \epsilon_s +\epsilon$. We apply Corollary~\ref{cor:smoothAPG} to the $s$-th epoch of Algorithm~\ref{alg:HOPS-smooth} and get
\begin{align}\label{eqn:th7-1}
F(x_s) - F(x_{s-1, \epsilon}^\dagger) \leq \frac{\mu_s D^2}{2} + \frac{2\| A \|^2\|x_{s-1, \epsilon}^\dagger - x_{s-1}\|^2}{\mu_s t^2} + \frac{2 M \|x_{s-1, \epsilon}^\dagger - x_{s-1}\|^2}{t^2}
\end{align}

First, we assume $F(x_{s-1}) - F_* \leq \epsilon$, i.e. $x_{s-1} \in \mathcal S_{\epsilon}$. Then we have $x_{s-1, \epsilon}^\dagger = x_{s-1}$ and 
\begin{align*}
F(x_s) - F(x_{s-1, \epsilon}^\dagger) \leq \frac{D^2\mu_s}{2} \leq \frac{\epsilon_s}{3}
\end{align*}
As a result, 
\begin{align*}
F(x_s) - F_* \leq F(x_{s-1, \epsilon}^\dagger) - F_* + \frac{\epsilon_s}{3} \leq \epsilon + \epsilon_s
\end{align*}
Next, we consider $F(x_{s-1}) - F_* > \epsilon$, i.e. $x_{s-1} \notin \S_{\epsilon}$. Then we have $F(x_{s-1, \epsilon}^\dagger)  - F_* = \epsilon$.
Recall that
\begin{align}\label{eqn:th7-2}
\|x_{s-1} - x^\dagger_{s-1,\epsilon}\| \leq \frac{c \epsilon_{s-1}}{\epsilon^{1-\theta}}
\end{align}
Combining (\ref{eqn:th7-1}) and (\ref{eqn:th7-2}) and using the fact that  $\mu_s = \frac{2\epsilon_s}{3D^2}$ and $t \geq \max\left\{ \frac{3D \|A\| bc}{\epsilon^{1-\theta}}, \frac{\sqrt{6M\epsilon_s}bc}{\epsilon^{1-\theta}}\right\}$, we get
\begin{align*}
F(x_s) - F(x_{s-1, \epsilon}^\dagger) & \leq \frac{\epsilon_s}{3}+ \frac{3D^2\| A \|^2 c^2 \epsilon_{s-1}^2 }{\epsilon_s \epsilon^{2(1-\theta)} t^2} + \frac{2 M c^2 \epsilon_{s-1}^2}{\epsilon^{2(1-\theta)} t^2} \\
& \leq \frac{\epsilon_s}{3}+ \frac{\epsilon_{s-1}^2 }{3\epsilon_s b^2 } + \frac{\epsilon_{s-1}^2 }{3\epsilon_s b^2 } = \epsilon_s
\end{align*}
which together with the fact that $ F(x^\dagger_{s-1,\epsilon})=F_*+\epsilon$ implies
\begin{align*}
F(x_s) - F_*& \leq \epsilon  + \epsilon_s
\end{align*}
Therefore by induction, we have
\[
F(x_m) - F_*\leq \epsilon_m + \epsilon = \frac{\epsilon_0}{b^m} +\epsilon\leq 2\epsilon 
\]
where the last inequality is due to the value of $m=\lceil \log_b(\frac{\epsilon_0}{\epsilon})\rceil$.

In fact, the number of iteration in each stage depends on $s$, then the iteration complexity for achieving an 2$\epsilon$-optimal solution is
\begin{align*}
 \sum_{s=1}^{m} \max\left\{ \frac{3D \|A\| bc}{\epsilon^{1-\theta}}, \frac{\sqrt{6M\epsilon_s}bc}{\epsilon^{1-\theta}}\right\} & \leq \sum_{s=1}^{m} \frac{3D\|A\|bc+\sqrt{6M\epsilon_s}bc}{\epsilon^{1-\theta}} \\
& = \frac{3D\|A\|bc}{\epsilon^{1-\theta}} \left\lceil \log_b\left(\frac{\epsilon_0}{\epsilon}\right) \right\rceil + \sum_{s=1}^{m} \frac{\sqrt{6M\epsilon_0}bc}{\sqrt{b^s}\epsilon^{1-\theta}} \\
& \leq \frac{3D\|A\|bc}{\epsilon^{1-\theta}} \left\lceil \log_b\left(\frac{\epsilon_0}{\epsilon}\right) \right\rceil + \frac{\sqrt{6M\epsilon_0}bc}{(\sqrt{b}-1)\epsilon^{1-\theta}} 
\end{align*}
\end{proof}

\subsection{HOPS with a $p$-norm}
As we mentioned in the paper, we can generalize the results to a smoothness definition with respect to a $p$-norm $\|x\|_p$ with $p\in(1,2]$, which makes $\frac{1}{2}\|x\|_p^2$ a $(p-1)$-strongly convex function w.r.t $\|\cdot\|_p$. Algorithm~\ref{alg:0} and Algorithm~\ref{alg:APG-smooth} remain the same with $\sigma_1 = p-1$ except that the norm $\|\cdot\|$ is replaced with $\|\cdot\|_p$ in the updates of $x_k$ and $z_k$. In order to have  efficient updates using a $p$-norm, we assume $\Omega_1=\R^d$. Similarly as before, we introduce several notations. Let $dist_p(x,\Omega_*)=\min_{z\in\Omega_*}\|x - z\|_p$. Let $x_\epsilon^\dagger$ denote the closest point in the $\epsilon$-sublevel set to $x$ measured in $p$-norm, i.e, 
\[
x_\epsilon^\dagger = \arg\min_{z\in\R^d}\|z - x\|^2_p, \quad s.t. \quad F(z)\leq F_* + \epsilon
\]

The following lemma is a generalization of Lemma~\ref{lem:1} to a $p$-norm. 
\begin{lemma}[\citep{DBLP:journals/corr/arXiv:1512.03107}]\label{lem:4}
For any $x\in\R^d$ and $\epsilon>0$, we have
\[
\|x - x^\dagger_\epsilon\|_p\leq \frac{dist_p(x^\dagger_\epsilon, \Omega_*)}{\epsilon}(F(x) - F(x_\epsilon^\dagger))
\]
where $x^\dagger_\epsilon\in\mathcal S_\epsilon$ is the closest point in the $\epsilon$-sublevel set to $x$. 
\end{lemma}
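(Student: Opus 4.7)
The plan is to prove Lemma~\ref{lem:4} as a direct $p$-norm analogue of Lemma~\ref{lem:1}, by inspecting the proof of the latter and checking that every step uses only generic properties of a norm on $\R^d$ (triangle inequality, existence of a projection onto a closed convex set) together with the convexity of $F$. None of these exploit the Euclidean structure, so I would essentially transcribe the geometric argument with $\|\cdot\|$ replaced throughout by $\|\cdot\|_p$.

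First I would dispose of the trivial case $x \in \mathcal{S}_\epsilon$, for which $x^\dagger_\epsilon = x$ and both sides vanish. For the remaining case $x \notin \mathcal{S}_\epsilon$, note that $\mathcal{S}_\epsilon$ is a closed convex set (sublevel set of a convex function) and that the $p$-norm closest point from an exterior point must lie on its boundary, so $F(x^\dagger_\epsilon) = F_* + \epsilon$; indeed, if strict inequality $F(x^\dagger_\epsilon) < F_* + \epsilon$ held, continuity of $F$ would permit moving $x^\dagger_\epsilon$ slightly toward $x$ while remaining in $\mathcal{S}_\epsilon$, contradicting minimality.

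Next, pick $z^* \in \Omega_*$ achieving $\|x^\dagger_\epsilon - z^*\|_p = dist_p(x^\dagger_\epsilon, \Omega_*)$ (the minimum is attained since $\Omega_*$ is compact by Assumption~\ref{ass:1}(iv)). For $\alpha \in [0,1]$, convexity of $F$ gives
\[
F(\alpha z^* + (1-\alpha) x) \leq \alpha F_* + (1-\alpha) F(x),
\]
so $y_\alpha := \alpha z^* + (1-\alpha) x$ lies in $\mathcal{S}_\epsilon$ whenever $1-\alpha \leq \epsilon/(F(x) - F_*)$. Choosing equality, i.e. $1-\alpha = \epsilon/(F(x)-F_*)$ and hence $\alpha = (F(x)-F_*-\epsilon)/(F(x)-F_*)$, yields $y_\alpha \in \mathcal{S}_\epsilon$, and by the defining property of $x^\dagger_\epsilon$,
\[
\|x - x^\dagger_\epsilon\|_p \leq \|x - y_\alpha\|_p = \alpha \|x - z^*\|_p.
\]
The $p$-norm triangle inequality gives $\|x - z^*\|_p \leq \|x - x^\dagger_\epsilon\|_p + dist_p(x^\dagger_\epsilon, \Omega_*)$, and rearranging yields $(1-\alpha)\|x - x^\dagger_\epsilon\|_p \leq \alpha\, dist_p(x^\dagger_\epsilon, \Omega_*)$. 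Substituting the chosen values of $\alpha$ and $1-\alpha$, and using $F(x^\dagger_\epsilon) = F_* + \epsilon$ to rewrite $F(x) - F_* - \epsilon = F(x) - F(x^\dagger_\epsilon)$, produces the claimed inequality.

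There is essentially no real obstacle here: the argument works uniformly over any norm on $\R^d$, and the only mildly nontrivial verification is that the projection lies on the level set $\mathcal{L}_\epsilon$, handled by the continuity argument above. One should only note that the assumption $p \in (1,2]$ ensures $\|\cdot\|_p$ is a genuine norm (so triangle inequality applies) and that the compactness of $\Omega_*$ required for existence of $z^*$ is part of Assumption~\ref{ass:1}(iv).
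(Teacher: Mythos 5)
Your proof is correct, but it takes a genuinely different and more elementary route than the paper's. The paper proves Lemma~\ref{lem:4} by writing down the Lagrangian optimality conditions of the $p$-norm projection onto $\mathcal S_\epsilon$, using the explicit gradient formula for $\tfrac12\|\cdot\|_p^2$, introducing a multiplier $\zeta$, and bounding $1/\zeta$ from below through the dual $q$-norm of a subgradient $v_\epsilon^\dagger\in\partial F(x_\epsilon^\dagger)$ together with a second application of convexity at the nearest optimal point $x_\epsilon^*$. You avoid subgradients and multipliers entirely: you slide along the segment from $x$ to $z^*\in\Omega_*$, use convexity of $F$ to certify that the point $y_\alpha$ with $1-\alpha=\epsilon/(F(x)-F_*)$ lies in $\mathcal S_\epsilon$, invoke the minimality of $x_\epsilon^\dagger$ to get $\|x-x_\epsilon^\dagger\|_p\leq\alpha\|x-z^*\|_p$, and close with the triangle inequality. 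The algebra checks out: $(1-\alpha)\|x-x_\epsilon^\dagger\|_p\leq\alpha\,dist_p(x_\epsilon^\dagger,\Omega_*)$ rearranges exactly to the claimed bound. What your approach buys is generality and robustness: it uses only the norm axioms and convexity of $F$, so it works verbatim for any norm (in particular it subsumes Lemma~\ref{lem:1} and does not need $p\in(1,2]$ or differentiability of $\tfrac12\|\cdot\|_p^2$), and it sidesteps the constraint-qualification issues implicit in invoking KKT. One small remark: your continuity argument for $F(x_\epsilon^\dagger)=F_*+\epsilon$ is fine but not actually needed at the end — your derivation gives $\|x-x_\epsilon^\dagger\|_p\leq \frac{F(x)-F_*-\epsilon}{\epsilon}\,dist_p(x_\epsilon^\dagger,\Omega_*)$, and since $x_\epsilon^\dagger\in\mathcal S_\epsilon$ guarantees $F(x)-F_*-\epsilon\leq F(x)-F(x_\epsilon^\dagger)$, the stated inequality follows without knowing that the projection lies exactly on the level set. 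What the paper's heavier machinery buys, by contrast, is essentially nothing extra for this lemma; it is simply the direct transcription of their Euclidean KKT argument to the $p$-norm.
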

The proof of the above  lemma can be also found in~\citep{DBLP:journals/corr/arXiv:1512.03107}. For completeness, we give the proof in Appendix. 

To establish the improved convergence, we assume the following local error bound condition using the $p$-norm.
\begin{definition}[Local error bound]
A function $F(x)$ is said to satisfy a local error bound condition w.r.t a $p$-norm if there exist $\theta\in(0,1]$ and $c>0$ such that for any $x\in\mathcal S_\epsilon$
\begin{align}\label{eqn:leb_dual}
dist_p(x, \Omega_*)\leq c(F(x) - F_*)^{\theta}
\end{align}
\end{definition}
The convergence of APG with a $p$-norm is similar to Corollary~\ref{cor:1} in the paper. 
\begin{cor}\label{cor:7}
For any $x\in\R^d$, by running APG with a $p$-norm, we have
\begin{align}\label{eqn:cs3}
F(x_t)- F(x)\leq  \mu D^2/2 + \frac{2L_\mu\|x - x_0\|_p^2}{t^2} 
\end{align}
\end{cor}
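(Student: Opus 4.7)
The plan is to reduce Corollary~\ref{cor:7} to the $p$-norm analogue of Theorem~\ref{thm:APG} (the APG convergence guarantee on the smoothed objective) followed by the sandwich inequality in~(\ref{eqn:approx}). Concretely, I would first establish that for the APG iterates generated with the $p$-norm replacing the Euclidean norm in the updates of $x_k$ and $z_k$, we have, for any $x\in\R^d$,
\begin{align*}
F_\mu(x_t) - F_\mu(x)\leq \frac{2L_\mu\|x-x_0\|_p^2}{t^2},
\end{align*}
and then convert this bound on $F_\mu$ to the desired bound on $F$.

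The first step is the nontrivial one, but it is really a direct transcription of the standard accelerated composite gradient analysis once the two structural ingredients are in place: (a) $f_\mu$ is $L_\mu$-smooth with respect to $\|\cdot\|_p$, which follows from Nesterov's smoothing construction together with the $p$-norm definition of $\|A\|$; and (b) the prox function $\tfrac{1}{2}\|x-x_0\|_p^2$ used to update $z_k$ is $(p-1)$-strongly convex w.r.t. $\|\cdot\|_p$ (this is the classical fact motivating the choice $\sigma_1 = p-1$). With these two ingredients, the estimate sequence / descent lemma argument underlying Theorem~\ref{thm:APG} goes through verbatim: the descent lemma on $F_\mu$ uses smoothness in $\|\cdot\|_p$, while the key inequality controlling the $z_k$-update uses strong convexity of the prox function in $\|\cdot\|_p$. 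The sequences $\theta_k,\alpha_k$ satisfy the same recursion, giving the $2/t^2$ rate.

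The second step is short. By the definition of $F_\mu$ and inequality~(\ref{eqn:approx}),
\begin{align*}
F(x_t) = f(x_t) + g(x_t) \leq f_\mu(x_t) + \tfrac{\mu D^2}{2} + g(x_t) = F_\mu(x_t) + \tfrac{\mu D^2}{2},
\end{align*}
while $F_\mu(x) \leq F(x)$ since $f_\mu(x)\leq f(x)$. Subtracting and substituting the bound from the first step yields
\begin{align*}
F(x_t) - F(x) \leq F_\mu(x_t) - F_\mu(x) + \tfrac{\mu D^2}{2} \leq \tfrac{\mu D^2}{2} + \frac{2L_\mu\|x-x_0\|_p^2}{t^2},
\end{align*}
which is the statement of the corollary.

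The main obstacle is purely bookkeeping in step one: one must verify that every inner product / norm bound in the classical APG proof (in particular, the use of the three-point inequality for the prox step and the Lipschitz descent lemma) remains valid when $\|\cdot\|$ is replaced by $\|\cdot\|_p$, with the factor $\sigma_1 = p-1$ entering wherever strong convexity of the prox function is invoked. Since we assume $\Omega_1 = \R^d$, the updates $\widetilde{\Pi}^{c}_{u}$ reduce to unconstrained strongly convex problems in $\|\cdot\|_p$, so no additional projection subtleties arise, and the argument from Theorem~\ref{thm:APG} transfers without essential change.
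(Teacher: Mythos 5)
Your proposal is correct and follows essentially the same route as the paper, which derives Corollary~\ref{cor:7} exactly as Corollary~\ref{cor:1} is derived from Theorem~\ref{thm:APG}: run the APG analysis on the smoothed objective $F_\mu$ with the $p$-norm prox term (using $L_\mu$-smoothness of $f_\mu$ w.r.t. $\|\cdot\|_p$ and $(p-1)$-strong convexity of $\tfrac12\|\cdot\|_p^2$, i.e. $\sigma_1=p-1$), then transfer to $F$ via the sandwich inequality~(\ref{eqn:approx}). Both your write-up and the paper leave the $\sigma_1$-dependence out of the final displayed constant, so your statement matches the paper's.
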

Finally, we have the similar convergence as Theorem~\ref{thm:GDr} for HOPS except that  $\|A\|$ is defined using the $p$-norm of $x$.

\section{Primal-Dual Homotopy Smoothing (PD-HOPS)}

We note that the required  number of iterations per-stage $t$ for finding an $\epsilon$ accurate solution depends on unknown constant $c$ and sometimes $\theta$. Thus, an inappropriate setting of $t$ may lead to a less accurate solution. To address this issue, we present a primal-dual homotopy smoothing. Basically, we also apply the homotopy smoothing to the dual problem:
\begin{align}\label{eqn:dual-opt}
\max_{u\in\Omega_2} \Phi(u) \triangleq -\phi(u) + \underbrace{\min_{x\in\Omega_1}\langle A^{\top}u, x\rangle  + g(x)}\limits_{\psi(u)}
\end{align}
Denote by $\Phi_*$ the optimal value of the above problem. Under some mild conditions, it is easy to see that $\Phi_* = F_*$. By extending the analysis and result to the dual problem, we can obtain that $F(x_m) - \Phi(u_m) \leq 4\epsilon$. Thus, we can use the duality gap $F(x_s) - \Phi(u_s)$ as a certificate to monitor the progress of optimization. In this section, we present the details of primal-dual HOPS. 

\subsection{Nesterov's Smoothing on the Dual Problem}

We construct a smooth function from $\psi_\eta(u)$ that well approximates $\psi(u)$:
\begin{align*}
\psi_\eta(u) = \min_{x\in\Omega_1}\langle A^{\top}u, x\rangle  + g(x) + \eta \omega(x)
\end{align*}
where $\omega(x)$ is a 1-strongly convex function w.r.t. $x$ in terms of a norm $\| \cdot \|$~\footnote{This could be a general norm.}. Similarly, we know that $\psi_\eta(u)$ is a smooth function of $u$ with respect to an Euclidean norm $\|u\|$ with smoothness parameter $L_\eta = \frac{1}{\eta} \| A\|_+^2$, where $\|A\|_+$ is defined by $\|A\|_+=\max_{\|x\|\leq 1}\max_{\|u\|_+\leq 1}\langle A^\top u, x\rangle$. Denote by 
\[
x_\eta(u) = \arg\min_{x\in\Omega_1}\langle A^{\top}u, x\rangle  + g(x) + \eta \omega(x)
\]
The gradient of $\psi_\eta(u)$ is computed by $\nabla \psi_\eta(u) = Ax_\eta(u)$. 
We can see that when $\eta$ is very small, $\psi_\eta(u)$ gives a good approximation of $\psi(u)$. This motivates us to solve the following composite optimization problem
\begin{align*}
\max_{u\in\Omega_2} \Phi_\eta(u) \triangleq -\phi(u) + \psi_\eta(u) 
\end{align*}
Similar to solving the primal problem, an accelerated proximal gradient method for dual problem can be employed to solve the above problem. We present the details in Algorithm~\ref{DAPG}. We present the convergence results for Algorithm~\ref{DAPG} in the following theorem:
\begin{thm}\label{thm:DAPG}\citep{Nesterov:2005:SMN,citeulike:6604666}
Let $\theta_k  = \frac{2}{k+2}$, $\alpha_k = \frac{2}{k+1}, k\geq 0$ or $\alpha_{k+1} = \theta_{k+1} = \frac{\sqrt{\theta_k^4+4\theta_k^2} - \theta_k^2}{2}, k\geq 0$.  For any $u\in\Omega_2$, we have
\begin{align}
\Phi_\eta(u)  - \Phi_\eta(u_t)\leq \frac{2L_\eta\|u - u_0\|^2 }{t^2}
\end{align}
\end{thm}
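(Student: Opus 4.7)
The statement is the dual-side analogue of Theorem~\ref{thm:APG}, so my plan is to reduce it to exactly the same estimate-sequence argument that underlies Theorem~\ref{thm:APG}, applied now to the smooth maximization problem $\max_{u\in\Omega_2}\Phi_\eta(u)$ with $\Phi_\eta(u) = -\phi(u) + \psi_\eta(u)$. By flipping signs, this is a minimization of $-\Phi_\eta = \phi(u) - \psi_\eta(u)$, in which $-\psi_\eta(u)$ plays the role of the smooth component ``$f_\mu$'' from the primal side and $\phi(u)$ plays the role of the simple component ``$g$''. The first step is to verify that $-\psi_\eta$ satisfies the usual descent lemma with constant $L_\eta = \|A\|_+^2/\eta$: this follows because $\nabla\psi_\eta(u) = Ax_\eta(u)$ (already stated), and standard Danskin/infimal-convolution calculus yields $\|\nabla\psi_\eta(u_1) - \nabla\psi_\eta(u_2)\| \le L_\eta\|u_1 - u_2\|$ with $L_\eta = \|A\|_+^2/\eta$ from the $\eta$-strong convexity of $g + \eta\omega$ in $\|\cdot\|$ and the definition of $\|A\|_+$.

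With smoothness in hand, I would introduce the standard APG potential (estimate sequence) used to prove Theorem~\ref{thm:APG}. Concretely, at iteration $k$ let
\begin{equation*}
\Delta_k \;=\; \tfrac{1}{\theta_{k-1}^2}\bigl(\Phi_\eta(u)-\Phi_\eta(u_k)\bigr) \;-\; \tfrac{L_\eta}{2\sigma_1}\|u - z_k\|^2,
\end{equation*}
or the equivalent weighted-sum form used in Tseng's analysis with weights $1/\alpha_k$ and $\Gamma_k$. Using (i) the $L_\eta$-descent inequality for $-\psi_\eta$ at $y_k$, (ii) convexity of $\phi$ tested between $z_{k+1}$ and $u$, and (iii) the optimality condition of the proximal step that defines $z_{k+1}$ (the three-point/prox inequality giving an extra $\tfrac{L_\eta}{2\sigma_1}\|z_{k+1}-z_k\|^2$ term), the usual calculation shows
\begin{equation*}
\Phi_\eta(u_{k+1}) \;\ge\; (1-\theta_k)\Phi_\eta(u_k) + \theta_k \Phi_\eta(u) + \tfrac{L_\eta \theta_k^2}{2\sigma_1}\bigl(\|u-z_k\|^2-\|u-z_{k+1}\|^2\bigr),
\end{equation*}
i.e.\ $\Delta_{k+1} \le \Delta_k$ under the recursion $\theta_{k+1}^2 = (1-\theta_{k+1})\theta_k^2$ (or equivalently the explicit choice $\theta_k = 2/(k+2)$). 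Since $z_0 = u_0$ and $\theta_0 = 1$, telescoping from $k=0$ to $t-1$ together with $\theta_{t-1}\le 2/(t+1)$ yields the claimed $O(L_\eta/t^2)$ bound.

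The main obstacle, which is mostly bookkeeping rather than conceptual, is being careful with the two alternative schedules for $\{\alpha_k,\theta_k\}$: the sequence version ($\alpha_{k+1}=\theta_{k+1}=(\sqrt{\theta_k^4+4\theta_k^2}-\theta_k^2)/2$) requires verifying $\theta_{k+1}^2=(1-\theta_{k+1})\theta_k^2$ directly, while the closed-form version ($\theta_k=2/(k+2)$, $\alpha_k=2/(k+1)$) requires checking the slightly different weighted recursion $(1-\theta_{k+1})/\theta_{k+1}^2 \le 1/\theta_k^2$. In both cases the induction produces the factor $1/\theta_{t-1}^2 \le (t+1)^2/4$, and combined with $\sigma_1 = 1$ (since $\|\cdot\|$ on the dual is the Euclidean norm in Algorithm~\ref{DAPG}) one obtains the $\tfrac{2L_\eta\|u-u_0\|^2}{t^2}$ bound stated in the theorem. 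Since nothing in the argument beyond the smoothness constant and the strong convexity of $\tfrac{1}{2}\|\cdot\|^2$ distinguishes the dual setup from the primal setup of Theorem~\ref{thm:APG}, the result follows by directly invoking the cited analyses of \citet{Nesterov:2005:SMN} and \citep{citeulike:6604666} applied to $(-\Phi_\eta, \phi, L_\eta)$ in place of $(F_\mu, g, L_\mu)$.
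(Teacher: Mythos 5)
Your proposal is correct and matches the paper's treatment: the paper offers no proof of Theorem~\ref{thm:DAPG}, deferring entirely to \citet{Nesterov:2005:SMN} and the Tseng-style analysis, exactly as you do after reducing the maximization of $\Phi_\eta$ to minimization of $-\Phi_\eta=\phi-\psi_\eta$ with smooth part $-\psi_\eta$ of modulus $L_\eta=\|A\|_+^2/\eta$. The only cosmetic slip is that the strong-convexity constant in Algorithm~\ref{DAPG} is denoted $\sigma_2$ rather than $\sigma_1$, which does not affect the argument.
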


\subsection{HOPS for the Dual Problem}
\begin{algorithm}[t]
\caption{An Accelerated Proximal Gradient Method for solving dual problem (\ref{eqn:dual-opt}): $\text{DAPG}(u_0, t, L_\eta)$} \label{DAPG}
\begin{algorithmic}[1]
\STATE \textbf{Input}: the  number of iterations $t$, the initial solution $u_0$, and the smoothness constant $L_\eta$
\STATE Let $\theta_0=1$, $V_{-1}=0$, $\Gamma_{-1}=0$, $r_0 = u_0$
\STATE Let $\alpha_k$ and $\theta_k$ be two sequences given in Theorem~\ref{thm:DAPG}. 
\FOR{$k=0,\ldots, t-1$}
    \STATE Compute $w_{k} = (1-\theta_k)u_k + \theta_k r_k$
    \STATE Compute $v_k=\nabla \psi_\eta(w_{k}) $, $V_k =V_{k-1} - \frac{v_k}{\alpha_k}$, and $\Gamma_{k}=\Gamma_{k-1} + \frac{1}{\alpha_k}$ 
     \STATE Compute $r_{k+1}=\Pi^{L_\eta/\sigma_2}_{V_k, \Gamma_k\phi}(u_0)$ and $u_{k+1} = \Pi^{L_\eta}_{-v_k, \phi}(w_k)$
   \ENDFOR
\STATE \textbf{Output}:  $u_t$
\end{algorithmic}
\end{algorithm}
\begin{algorithm}[t]
\caption{Homotopy Smoothing (HOPS) for solving dual problem (\ref{eqn:dual-opt}) } \label{alg:DHOPS}
\begin{algorithmic}[1]
\STATE \textbf{Input}: the number of stages $m$ and  the  number of iterations $t$ per-stage, and the initial solution $u_0\in\Omega_2$ and a parameter $b>1$. 
\STATE Let $\eta_1 = \epsilon_0/(b \widetilde{D}^2)$
\FOR{$s=1,\ldots, m$}
    \STATE Let $u_{s} = \text{DAPG}(u_{s-1}, t, L_{\eta_{s}})$ 
    \STATE Update $\eta_{s+1} = \eta_s/b$ 
   \ENDFOR
\STATE \textbf{Output}:  $u_m$
\end{algorithmic}
\end{algorithm}

Similar to primal problem, we can also develop  the HOPS for dual problem, which is presented in Algorithm~\ref{alg:DHOPS}. A convergence can be established similarly by exploring a local error bound condition on $\Phi(u)$.  To present the convergence result, we make the following assumptions, which are similar as the primal problem. 
\begin{ass}\label{ass:2} For a concave maximization problem~(\ref{eqn:dual-opt}), we assume
(i) there exist $u_0\in\Omega_2$ and $\epsilon_0\geq 0$ such that $\max_{u\in\Omega_2}\Phi(u) - \Phi(u_0)\leq \epsilon_0$;
(ii) let $\psi(u) = \min_{x\in\Omega_1}\langle A^{\top}u, x\rangle  + g(x)$, where $g(x)$ is a convex function; 
(iii) There exists a  constant $\widetilde{D}$ such that $\max_{x\in\Omega_1}\omega(x)\leq \widetilde{D}^2/2$. 
\end{ass}
Let $\widetilde\Omega_*$ denote the optimal solution set of~(\ref{eqn:dual-opt}). For any $u\in\Omega_2$, let $u^*$ denote the closest optimal solution in $\widetilde{\Omega}_{*}$ to $u$, i.e., $u^* = \arg\min_{v\in\widetilde{\Omega}_{*}}\|v - u\|^2$. 
We denote by  $\widetilde{\mathcal L}_\epsilon$ the  $\epsilon$-level set of $\Phi(u)$ and  by $\widetilde{\mathcal S}_\epsilon$  the $\epsilon$-sublevel set of $\Phi(u)$, respectively, i.e.,
\begin{align}\label{eqn:xepsilon2}
\widetilde{\mathcal L}_\epsilon &= \{u\in\Omega_2: \Phi(u) = \Phi_* - \epsilon\},\quad \widetilde{\mathcal S}_\epsilon = \{u\in\Omega_2: \Phi(u) \geq \Phi_* - \epsilon\}
\end{align}
A local error bound condition is also imposed. 

\begin{definition}[Local error bound (LEB)] \label{leb2}
A function $\Phi(u)$ is said to satisfy a local error bound condition if there exist $\tilde\theta\in(0,1]$ and $\tilde{c}>0$ such that for any $u\in\widetilde{\mathcal S}_\epsilon$
\begin{align}\label{eqn:leb2}
dist(u, \widetilde{\Omega}_*)\leq \tilde{c}(\Phi_* - \Phi(u))^{\tilde\theta}
\end{align}
\end{definition}

\begin{thm}\label{thm:GDr2}
Suppose Assumption~\ref{ass:2} holds and $\Phi(u)$ obeys the local error bound condition.  Let HOPS for dual problem run with $t=O\left( \frac{2b\tilde{c}\widetilde{D}\|A\|_+}{\epsilon^{1-\tilde\theta}}\right)\geq \frac{2b\tilde{c}\widetilde{D}\|A\|_+}{\epsilon^{1-\tilde\theta}}$ iterations for each stage, and $m = \lceil \log_b(\frac{\epsilon_0}{\epsilon})\rceil$. Then  
\begin{align*}
\Phi_* - \Phi(u_m) \leq 2\epsilon. 
\end{align*}
Hence, the iteration complexity for achieving an $2\epsilon$-optimal solution is $\frac{2b \tilde c \widetilde{D}\|A\|_+}{\epsilon^{1-\tilde\theta}}\lceil \log_b(\frac{\epsilon_0}{\epsilon})\rceil$ in the worst-case.
\end{thm}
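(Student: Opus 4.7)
The plan is to mirror the proof of Theorem \ref{thm:GDr} on the dual side, with care taken to reverse inequalities because we are now maximizing a concave function $\Phi(u)$. I will proceed by induction on the stage index $s$, showing $\Phi_* - \Phi(u_s) \leq \epsilon_s + \epsilon$ for $\epsilon_s \triangleq \epsilon_0/b^s$, from which the claim follows at $s=m$ by the choice of $m$.

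The first preparatory step is to establish the dual counterpart of Corollary \ref{cor:1}. Since $\omega(x) \geq 0$ and $\omega(x) \leq \widetilde D^2/2$ on $\Omega_1$, the smoothed dual satisfies the sandwich $\Phi(u) \leq \Phi_\eta(u) \leq \Phi(u) + \eta \widetilde D^2/2$. Combining this with the APG bound from Theorem \ref{thm:DAPG}, for any $u \in \Omega_2$,
\begin{align*}
\Phi(u) - \Phi(u_t) \;\leq\; \Phi_\eta(u) - \Phi_\eta(u_t) + \tfrac{\eta \widetilde D^2}{2} \;\leq\; \tfrac{\eta \widetilde D^2}{2} + \tfrac{2 L_\eta \|u - u_0\|^2}{t^2}.
\end{align*}
The second preparatory step is the dual analog of Lemma \ref{lem:1}: for $u \in \Omega_2$ and $u_\epsilon^\dagger$ the closest point in $\widetilde{\mathcal S}_\epsilon$ to $u$, one has $\|u - u_\epsilon^\dagger\| \leq \frac{dist(u_\epsilon^\dagger, \widetilde \Omega_*)}{\epsilon}(\Phi(u_\epsilon^\dagger) - \Phi(u))$. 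This follows from the same convexity/projection argument as in Lemma \ref{lem:1} applied to the convex function $-\Phi$ and the sublevel set $\widetilde{\mathcal S}_\epsilon = \{u : -\Phi(u) \leq -\Phi_* + \epsilon\}$.

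With these ingredients, the inductive step splits into two cases exactly as in the proof of Theorem \ref{thm:GDr}. If $u_{s-1} \in \widetilde{\mathcal S}_\epsilon$, then $u_{s-1,\epsilon}^\dagger = u_{s-1}$ and the approximation term alone, with $\eta_s = \epsilon_s/\widetilde D^2$, gives $\Phi(u_{s-1,\epsilon}^\dagger) - \Phi(u_s) \leq \epsilon_s/2$, hence $\Phi_* - \Phi(u_s) \leq \epsilon + \epsilon_s$. Otherwise $\Phi(u_{s-1,\epsilon}^\dagger) = \Phi_* - \epsilon$, and chaining the dual lemma with the local error bound in Definition \ref{leb2} yields
\begin{align*}
\|u_{s-1} - u_{s-1,\epsilon}^\dagger\| \;\leq\; \frac{dist(u_{s-1,\epsilon}^\dagger, \widetilde \Omega_*) \, \epsilon_{s-1}}{\epsilon} \;\leq\; \frac{\tilde c \, \epsilon^{\tilde\theta} \, \epsilon_{s-1}}{\epsilon} \;=\; \frac{\tilde c \, \epsilon_{s-1}}{\epsilon^{1-\tilde\theta}}.
\end{align*}
Plugging this bound, $\eta_s = \epsilon_s/\widetilde D^2$, and $L_{\eta_s} = \|A\|_+^2/\eta_s$ into the first preparatory inequality, and using $t \geq \frac{2b\tilde c \widetilde D \|A\|_+}{\epsilon^{1-\tilde\theta}}$, the second term becomes at most $\epsilon_{s-1}^2/(2 b^2 \epsilon_s) = \epsilon_s/2$, so $\Phi(u_{s-1,\epsilon}^\dagger) - \Phi(u_s) \leq \epsilon_s$, and again $\Phi_* - \Phi(u_s) \leq \epsilon + \epsilon_s$.

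The induction concludes that $\Phi_* - \Phi(u_m) \leq \epsilon_m + \epsilon = \epsilon_0/b^m + \epsilon \leq 2\epsilon$ by the choice $m = \lceil \log_b(\epsilon_0/\epsilon) \rceil$, and multiplying the per-stage count $t$ by $m$ gives the stated overall complexity. The only non-routine step is the dual analog of Lemma \ref{lem:1}; I expect the cleanest argument to be a direct adaptation of the proof in \citep{DBLP:journals/corr/arXiv:1512.03107} after the sign change $F \mapsto -\Phi$, but care must be taken to verify that the boundedness hypotheses in Assumption \ref{ass:2} still guarantee that $\widetilde \Omega_*$, $\widetilde{\mathcal S}_\epsilon$, and $\widetilde{\mathcal L}_\epsilon$ are nonempty and that $dist(\widetilde{\mathcal L}_\epsilon, \widetilde \Omega_*)$ is finite, so that the projection $u_\epsilon^\dagger$ is well defined.
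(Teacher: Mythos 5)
Your proposal is correct and follows exactly the route the paper intends: the paper gives no separate proof of Theorem~\ref{thm:GDr2}, stating only that it "can be proved similarly as Theorem~\ref{thm:GDr}," and your dualization — the sandwich $\Phi(u)\leq\Phi_\eta(u)\leq\Phi(u)+\eta\widetilde D^2/2$, the dual analog of Lemma~\ref{lem:1} obtained by applying it to the convex function $-\Phi$ on the sublevel set $\widetilde{\mathcal S}_\epsilon$, and the two-case induction with $\eta_s=\epsilon_s/\widetilde D^2$ — is precisely that argument carried out. Your closing caveat is also easily discharged: since $\Omega_2$ is closed, convex and bounded and $\Phi$ is concave and upper semicontinuous, $\widetilde\Omega_*$ is nonempty, convex and compact, so the projection $u_\epsilon^\dagger$ and $dist(\widetilde{\mathcal L}_\epsilon,\widetilde\Omega_*)$ are well defined.
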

The above theorem can be  proved similarly as Theorem~\ref{thm:GDr}. 

\subsection{Primal-Dual HOPS}

\begin{algorithm}[t]
\caption{Primal-Dual Homotopy Smoothing (PD-HOPS) for solving (1)} \label{alg:PDHOPS}
\begin{algorithmic}[1]
\STATE \textbf{Input}: the number of stages $m$,  initial solutions $x_0\in\Omega_1, u_0\in\Omega_2$ and a parameter $b>1$
\STATE Let $\epsilon_1 = \frac{\epsilon_0}{b}$, $\mu_1 = \frac{\epsilon_1}{D^2}$, $\eta_1 = \frac{\epsilon_1}{\widetilde{D}^2}$
\FOR{$s=1,\ldots, m$}
\FOR{$k=0, 1, \ldots,$}
    \STATE Update the sequence of $x_{k+1}$ as in Algorithm~\ref{alg:0} starting from $x_{s-1}$
    \STATE Update the sequence of $u_{k+1}$ as in Algorithm~\ref{DAPG} starting from $u_{s-1}$
    \STATE Check occasionally if $F(x_{k+1})-\Phi(u_{k+1}) \leq 2(\epsilon_s + \epsilon)$; break the loop if it is true
 \ENDFOR
     \STATE Update $x_{s} = x_{k+1}$ and $u_{s} = u_{k+1}$
     \STATE Update $\epsilon_{s+1} = \epsilon_s/b$, $\mu_{s+1} = \mu_s/b$ and $\eta_{s+1} = \eta_s/b$
\ENDFOR
\STATE \textbf{Output}:  $(x_m, u_m)$
\end{algorithmic}
\end{algorithm}

As mentioned before, we can use the duality gap $F(x_s) - \Phi(u_s)$ as a certificate to monitor the progress of optimization to address the problem of detecting the number of iterations per-stage $t$. We describe the details in Algorithm~\ref{alg:PDHOPS}. 
Suppose Assumptions \ref{ass:1} and \ref{ass:2} hold, following the analysis as in the proof of Theorem~\ref{thm:GDr}, when the number of iterations in the $s$-th epoch denoted by $t_s$ satisfies  $t_s\geq \max \{\frac{2bcD\|A\|}{\epsilon^{1-\theta}}  , \frac{2b\tilde{c}\widetilde{D}\|A\|_+}{\epsilon^{1-\tilde\theta}} \}$, we can have $F(x_s) - F_* \leq \epsilon + \epsilon_s$ and $\Phi_* - \Phi(u_s) \leq \epsilon + \epsilon_s$, so that
\begin{align}
F(x_s) - \Phi(u_s) \leq 2(\epsilon + \epsilon_s)
\end{align}
Hence, as long as the above inequality holds,  we restart the next stage. Then with at most $m= \lceil \log_b(\epsilon_0/\epsilon)\rceil$ epochs we  have
\begin{align}
F(x_m) - \Phi(u_m) \leq 2(\epsilon + \epsilon_m)\leq 4\epsilon. 
\end{align}
Similarly, we can show that PD-HOPS enjoys an $\widetilde O(\max\{1/\epsilon^{1-\theta}, 1/\epsilon^{1-\tilde\theta}\})$  iteration complexity, where $\tilde \theta$ is the exponent constant in the local error bound of the objective function for dual problem. For example, for linear classification problems with a piecewise linear loss and  $\ell_1$ norm regularizer we can have $\theta =1$ and $\tilde \theta = 1$, and PD-HOPS enjoys a linear convergence.

\section{Experiments}

In this section, we present some experimental results to demonstrate the effectiveness of HOPS and PD-HOPS by comparing with two state-of-the-art algorithms, the first-order Primal-Dual (PD) method~\citep{Chambolle:2011:FPA:1968993.1969036} and Accelerated Proximal Gradient (APG) methods. For APG, we implement two variants, where APG-D refers to the variant with the dual averaging style of update on one sequence of points (i.e., Algorithm~\ref{alg:0}) and APG-F refers to the variant of the FISTA style~\citep{Beck:2009:FIS:1658360.1658364}.  Similarly, we also implement the two variants for HOPS. We conduct experiments for solving three problems: (1) an  $\ell_1$-norm regularized hinge loss for linear classification on the w1a dataset~\footnote{https://www.csie.ntu.edu.tw/$\sim$cjlin/libsvmtools/datasets/}; (2) a total variation based ROF model~\citep{rudin1992nonlinear} for image denoising on the Cameraman picture~\footnote{http://pages.cs.wisc.edu/$\sim$swright/TVdenoising/}; (3) a nuclear norm regularized  absolute error minimization  for low-rank and sparse matrix decomposition on a synthetic data. The three problems are discussed in details below.

\begin{table}
\caption{Comparison of different optimization algorithms by the number of iterations and running time in second (mean $\pm$ standard deviation) for achieving a solution that satisfies $F(x) - F_*\leq \epsilon$.}\label{table1}
\vspace*{-0.1in}
\centering
  \def\sym#1{\ifmmode^{#1}\else\(^{#1}\)\fi}%
 \begin{tiny}
  \begin{tabular}{l*{6}{l}}
    \hline
    & \multicolumn{2}{c}{Linear Classification}  & \multicolumn{2}{c}{Image Denoising} & \multicolumn{2}{c}{Matrix Decomposition} \\
    & \multicolumn{1}{l}{$\epsilon=10^{-4}$} & \multicolumn{1}{l}{$\epsilon=10^{-5}$} & 
      \multicolumn{1}{l}{$\epsilon=10^{-3}$} & \multicolumn{1}{l}{$\epsilon=10^{-4}$}&
       \multicolumn{1}{l}{$\epsilon=10^{-3}$} & \multicolumn{1}{l}{$\epsilon=10^{-4}$} \\
     \hline
    PD&  9861 (1.58$\pm$0.02)     &  27215 (4.33$\pm$0.06)    &  8078 (22.01$\pm$0.51)  &  34292 (94.26$\pm$2.67)  &  2523 (4.02$\pm$0.10)    &    3441 (5.65$\pm$0.20)     \\
\hline
    APG-D   & 4918 (2.44$\pm$0.22)    &  28600 (11.19$\pm$0.26)     &   179204 (924.37$\pm$59.67)   & 1726043 (9032.69$\pm$539.01) &   1967 (6.85$\pm$0.08)    &  8622 (30.36$\pm$0.11)  \\
    APG-F  &  3277 (1.33$\pm$0.01)&  19444 (7.69$\pm$0.07)   & 14150 (40.90$\pm$2.28)     & 91380 (272.45$\pm$14.56)  &  1115 (3.76$\pm$0.06)    &  4151 (9.16$\pm$0.10)  \\
    \hline
    HOPS-D   &  1012 (0.44$\pm$0.02)     &  4101 (1.67$\pm$0.01)      &  3542 (13.77$\pm$0.13)  &  4501 (17.38$\pm$0.10) &  224 (1.36$\pm$0.02)    &  313 (1.51$\pm$0.03)   \\
     HOPS-F   & 1009 (0.46$\pm$0.02)    &  4102 (1.69$\pm$0.04)  &  2206 (6.99$\pm$0.15)  & 3905 (16.52$\pm$0.08) &  230 (0.91$\pm$0.01)    &  312 (1.23$\pm$0.01)   \\
    \hline
    PD-HOPS   &  846 (0.36$\pm$0.01) &  3370 (1.27$\pm$0.02)      &  2538 (7.97$\pm$0.13)  &  3605 (11.39$\pm$0.10)   &  124 (0.45$\pm$0.01)    &  162 (0.64$\pm$0.01)   \\
   \hline
  \end{tabular}
  \end{tiny}
\end{table}

\begin{itemize}
\item{
{\bf{Linear Classification:}} In linear classification problems, the goal is to solve the following optimization problem:
		\begin{align*}
			\min_{x \in \R^d}\quad  \frac{1}{n} \sum_{i=1}^{n}\ell(x^\top a_i,y_i) + \lambda r(x)
		\end{align*}
where $(a_i,y_i), i = 1, 2,\dots,n$ denote pairs of  and label of training data, $\ell(x^\top a_i,y_i) $ is loss function, $r(x)$ is regularizer, and $\lambda$ is regularization parameter. 		
In our experiment, we use the hinge loss (a non-smooth function) $\ell(zy) = \max(0, 1-zy) = \max_{\alpha \in [0,1]} \alpha(1-zy)$ for loss function and the $\ell_1$-norm for regularizer:
		\begin{align}\label{eq:linerclass}
			\min_{x \in \R^d} F(x) \triangleq \frac{1}{n} \sum_{i=1}^{n}\max_{u_i \in [0,1]}u_i(1-y_ia_i^\top x) + \lambda \|x\|_1
		\end{align}
	We first write (\ref{eq:linerclass}) into the following equivalent minimax formulation
		\begin{align}
			\min_{x \in \R^d} \max_{u \in [0,1]^n} u^\top A x + \frac{u^\top \mathbf{1}}{n} + \lambda \|x\|_1
		\end{align}
	where matrix $A = -\frac{1}{n} (y_1a_1, y_2a_2,\dots, y_na_n)^{\top}$ and $\mathbf{1}$ is a vector of all ones. Thus, $f(x) = \max_{u \in [0,1]^n} u^\top A x + \frac{u^\top \mathbf{1}}{n} $ and $g(x) = \lambda \|x\|_1$. To apply Nesterov's smoothing technique, we construct  the following smoothed function
		\begin{align}
			f_{\mu}(x) =  \max_{u \in [0,1]^n} u^\top A x + \frac{u^\top \mathbf{1}}{n} - \frac{\mu}{2} \|u\|_2^2
		\end{align}		
	We construct the experiment on the w1a dataset, which contains $2,477$ training examples and $300$ features. We fix the regularization parameter $\lambda = n^{-1}$.
}

\item{	
{\bf{Image Denoising:}} For total variation (TV) based image denoising problem, we consider the following ROF model:
\begin{align}\label{eq:TVL2_ROF}
  \min_{x} \int_{\Omega} | \nabla x | + \frac{\lambda}{2} \| x - h \|_{2}^{2}   ,
\end{align}
\noindent
where $h$ is the observed noisy image, $\Omega \subset \R^{m \times n}$ is the image domain, $\int_{\Omega} | \nabla x |$ is the TV regularization term, and $\lambda$ is the trade-off parameter between regularization and fidelity.
Following the ROF setting in~\citep{Chambolle:2011:FPA:1968993.1969036}, we obtain the following discrete version:
\begin{align}\label{eq:discrete_ROF}
  \min_{x \in X} F(x)\triangleq\| \nabla x \|_{1} + \frac{\lambda}{2} \| x - h \|_{2}^{2}  .
\end{align}
\noindent
where $X = \R^{mn}$ is a finite dimensional vector space, $\nabla x \in Y$ and $Y = X \times X$.
The discrete gradient operator $\nabla x$ is defined as following that has two components:
$$
  (\nabla x)^{1}_{i,j} = \left\{
    \begin{aligned}
      & x_{i+1,j} - x_{i,j} & \text{if} ~ i < m \\
      & 0                   & \text{if} ~ i = m  \nonumber
    \end{aligned}
    \right.
$$
$$
  (\nabla x)^{2}_{i,j} = \left\{
    \begin{aligned}
      & x_{i,j+1} - x_{i,j} & \text{if} ~ j < n \\
      & 0                   & \text{if} ~ j = n , \nonumber
    \end{aligned}
    \right.
$$
\noindent
and $\| \nabla x \|_{1}$ is defined as
$$
\| \nabla x \|_{1} = \sum_{i,j} | (\nabla x)_{i,j} | = \sum_{i,j} \sqrt{((\nabla x)_{i,j}^1)^2 + ((\nabla x)_{i,j}^2)^2}   .
$$
According to~\citep{Chambolle:2011:FPA:1968993.1969036}, we have the minimax formulation of ROF model as
\begin{align}\label{eq:pd_ROF}
  \min_{x \in X} \max_{u \in\Omega_2} -\langle x, \text{div} u \rangle + \frac{\lambda}{2} \| x - h \|_2^2 
\end{align}
\noindent
where $\Omega_2=\{u: u\in Y, \|u\|_\infty\leq 1\}$, 
$\| u \|_{\infty} = \max_{i,j} \sqrt{(u_{i,j}^1)^2 + (u_{i,j}^2)^2}$, and $\text{div} u$ is the discrete divergence operator~\citep{Chambolle:2011:FPA:1968993.1969036}.  Thus, $f(x) = \max_{u \in \Omega_2} -\langle x, \text{div} u \rangle $ and $g(x) = \frac{\lambda}{2} \| x - h \|_2^2$.
By using Nesterov's smoothing technique, we have the following smoothed function
\begin{align}\label{eq:smoothed_ROF}
  \max_{u \in \Omega_2} -\langle x, \text{div} u \rangle - \frac{\mu}{2}\| u \|_2^2  .
\end{align}
In our experiment, we use Cameraman picture of size 256 $\times$ 256 with additive zero mean Gaussian noise with standard deviation $\sigma = 0.05$ and we set $\lambda = 20$.
}

\item{
{\bf{Matrix Decomposition:}} In low-rank and sparse matrix decomposition problem, suppose given a data matrix $O \in \R^{m \times n}$, we aim to decompose it as 
		\begin{align*}
			O = X + E
		\end{align*}
	where $X \in \R^{m \times n}$ is a  low-rank matrix, and $E \in \R^{m \times n}$ represents errors and it is sparse. 
	We use nuclear norm regularized absolute error minimization:
		\begin{align*}
			\min_{X \in \R^{m \times n}} & F(X) = \|X\|_* + \lambda \|E\|_1 \\ 
			&\text{s.t. } O = X + E
		\end{align*}
where $\|X\|_* = \sum_i \sigma_i (X)$ denotes the nuclear norm of matrix $X$, i.e.,  the summation of singular values  of matrix $X$, and $\|E\|_1 = \sum_{ij} |E_{ij}|$ denotes the $\ell_1$-norm of $E$. The above formulation is equavilent to
		\begin{align}\label{eq:matrixDec}
			\min_{X \in \R^{m \times n}} F(X) = \|X\|_* + \lambda \|O - X\|_1
		\end{align}		
We first write (\ref{eq:matrixDec}) into the following equivalent minimax formulation
		\begin{align}
			\min_{X \in \R^{m \times n}} \max_{\|U\|_{\infty} \leq 1}  -\lambda \langle X, U\rangle + \lambda \langle O, U\rangle + \|X\|_*
		\end{align}
	where $U \in \R^{m \times n}$ and $\|U\|_{\infty} = \max_{ij}|U_{ij}|$. Thus, $f(X) =  \max_{\|U\|_{\infty} \leq 1}  -\lambda \langle X, U\rangle + \lambda \langle O, U\rangle $ and $g(X) = \|X\|_*$. To apply Nesterov's smoothing technique, we consider the following smoothed function
		\begin{align}
			f_{\mu}(X) = \max_{\|U\|_{\infty} \leq 1}  -\lambda \langle X, U\rangle + \lambda \langle M, U\rangle - \frac{\mu}{2} \|U\|_F^2
		\end{align}		
	We set the regularization parameter $\lambda = (\max\{m,n\})^{-0.5}$. We conduct experiment on  a synthetic data with $m=n=100$. To generate the corrupted matrix $O \in \R^{m \times n}$, we first obtain two orthogonal matrices $S_1 \in \R^{m \times k}$ and $S_2 \in \R^{n \times k}$ ($k=10$) by Gaussian distribution. The low rank matrix $X$ can be calculated by $X = S_1 S_2^{\top}$. Then we randomly add Gaussian noise to $10\%$ elements of $X$ and obtain the corrupted matrix $O$.	
}
\end{itemize}

To make fair comparison, we stop each algorithm when the optimality gap is less than a given $\epsilon$ and count the number of  iterations  and the running time that each algorithm requires.  The optimal value is obtained by running PD with a sufficiently large number of iterations such that the duality gap is very small.  {We repeat each algorithm 10 times for solving a particular problem and then report the averaged running time in second and the corresponding standard deviations. The running time of PD-HOPS only accounts the time for updating the primal variable since the updates for the dual variable are fully decoupled from the primal updates and can be carried out in parallel.} 
For APG, we use the backtracking trick to tune $L_\mu$. For HOPS, we tune the number of iterations $t$ in each epoch among several values in the range of $\{10, 50, 100, 150, 200, 250, 300, 350, 400, 500, 1000\}$ and the parameter $b$ among $\{1.2, 2, 2.5, 3, 3.5, 4, 5, 10, 25\}$, and report the best results. We also tune the values of parameters $\sigma$ and $\tau$ and report the best results for PD. We present the comparison of different algorithms on different tasks in Table~\ref{table1}, where for PD-HOPS we only report the results of using the faster variant of APG, i.e.,  APG-F.  From the results, we can see that (i) HOPS converges consistently faster than their APG variants especially when $\epsilon$ is small; (ii) PD-HOPS allows for choosing the number of iterations at each epoch automatically, yielding faster convergence speed than  HOPS with manual  tuning; (iii) both HOPS and PD-HOPS are significantly faster than PD.


\section{Conclusions}
\vspace*{-0.1in}
In this paper, we have developed  a homotopy smoothing (HOPS) algorithm for solving a family of structured non-smooth optimization problems with formal guarantee on the iteration complexities. We show that the proposed  HOPS can achieve a lower iteration complexity of $\widetilde O(1/\epsilon^{1-\theta})$ with $\theta\in(0,1]$ for obtaining an $\epsilon$-optimal solution under a mild local error bound condition. The experimental results on three different tasks  demonstrate the effectiveness of HOPS.

\section*{Acknowlegements}
Y. Xu and T. Yang are partially supported by National Science Foundation (IIS-1463988, IIS-1545995).

\bibliography{all}

\appendix
\section{Proof of Lemma~\ref{lem:1}}
The lemma is an immediate result from~\citep{DBLP:journals/corr/arXiv:1512.03107}.  For completeness, we give the proof here.
\begin{proof}
Consider $\|x\|$ to be an Euclidean norm.  We first recall the definition of $x^\dagger_\epsilon$: 
\begin{equation}\label{eqn:ec}
\begin{aligned}
x_\epsilon^{\dagger}&=\arg\min_{z\in\mathcal S_\epsilon}\|z - x\|^2
\end{aligned}
\end{equation}
where  $\mathcal S_\epsilon=\{x\in\Omega_1: F(x)\leq F_* +\epsilon\}$ is the sublevel set. 
We assume $x\not\in\mathcal S_\epsilon$, otherwise the conclusion holds trivially. Thus $F(x_\epsilon^\dagger)=F_*+\epsilon$. 
By the first-order optimality conditions of~(\ref{eqn:ec}), we have for any $z\in\Omega_1$, there exists $\zeta\geq 0$ (the Lagrangian multiplier of problem~(\ref{eqn:ec}))
\begin{equation}\label{eqn:o2}
\begin{aligned}
&(x^\dagger_\epsilon - x + \zeta \partial F(x^\dagger_\epsilon))^{\top}(z - x_\epsilon^\dagger)\geq 0\\
\end{aligned}
\end{equation}
Let $z = x$  we have
\[
 \zeta \partial F(x^\dagger_\epsilon)^{\top}(x - x^\dagger_\epsilon)\geq \|x - x_\epsilon^\dagger\|^2
\]
We argue that $\zeta>0$, otherwise $x = x_\epsilon^\dagger$ contradicting to the assumption $x\not\in\mathcal S_\epsilon$. 
Therefore
\begin{align}\label{eqn:b1}
F(x) - F(x^\dagger_\epsilon)\geq  \partial F(x^\dagger_\epsilon)^{\top}(x - x^\dagger_\epsilon)\geq \frac{\|x - x_\epsilon^\dagger\|^2}{\zeta}=  \frac{\|x - x_\epsilon^\dagger\|}{\zeta}\|x - x_\epsilon^\dagger\|
\end{align}
Next we prove that $\zeta$ is upper bounded. Since
\begin{align*}
-\epsilon = F(x^*_\epsilon) - F(x^\dagger_\epsilon)\geq (x_\epsilon^* - x^\dagger_\epsilon)^{\top}\partial F(x_\epsilon^\dagger)
\end{align*}
where $x^*_\epsilon$ is the closest point to $x^\dagger_\epsilon$ in the optimal set. Let $z=x^*_\epsilon$ in the inequality of~(\ref{eqn:o2}), we have
\begin{align*}
(x_\epsilon^\dagger - x)^{\top}(x^*_\epsilon - x^\dagger_\epsilon)\geq \zeta ( x^\dagger_\epsilon - x^*_\epsilon)^{\top}\partial F(x^\dagger_\epsilon)\geq \zeta \epsilon
\end{align*}
Thus 
\[
\zeta \leq \frac{(x_\epsilon^\dagger - x)^{\top}(x^*_\epsilon - x^\dagger_\epsilon)}{\epsilon}\leq \frac{dist(x_\epsilon^\dagger, \Omega_*)\|x_\epsilon^\dagger - x\|}{\epsilon}
\]
Therefore 
\[
\frac{\|x - x_\epsilon^\dagger\|}{\zeta}\geq \frac{\epsilon}{dist(x_\epsilon^\dagger, \Omega_*)}
\]
Combining the above inequality with~(\ref{eqn:b1}) we have
\[
\|x - x_\epsilon^\dagger\|\leq \frac{dist(x_\epsilon^\dagger, \Omega_*)}{\epsilon}(F(x) - F(x^\dagger_\epsilon))
\]
which completes the proof. 
\end{proof}

\section{Proof of Lemma~\ref{lem:4}}
The proof of Lemma~\ref{lem:4} can be also found in~\citep{DBLP:journals/corr/arXiv:1512.03107}.  For completeness, we give the proof here.
\begin{proof}
We assume that $x\not\in\mathcal S_\epsilon$, otherwise  $x = x_\epsilon^\dagger$ and the lemma holds trivially. Thus $x^\dagger_\epsilon\in\mathcal L_\epsilon$, i.e., $F(x^\dagger_\epsilon)=F_* + \epsilon$. Note that
\[
\frac{\partial \|x\|_p}{\partial x_i}  = \frac{|x_i|^{p-1}sign(x_i)}{\|x\|_p^{p-1}}, \quad (\nabla\frac{1}{2}\|x\|_p^2)_i = \|x\|_p^{2-p}|x_i|^{p-1}sign(x_i)
\]

By the definition of $x_\epsilon^\dagger$ and the Lagrangian  theory, there exists a Lagrangian multiplier  $\zeta\geq 0$ and a subgradient $v^\dagger_\epsilon\in\partial F(x^\dagger_\epsilon)$ such that
\[
 \|x^\dagger_\epsilon - x\|^{2-p}_p|[x^\dagger_{\epsilon}-x]_i|^{p-1}\text{sign}([x^\dagger_{\epsilon}-x]_i)+ \zeta [v^\dagger_\epsilon]_i= 0, \forall i.
\]
It is clear that $\zeta>0$, otherwise  $x=x^\dagger_\epsilon$ that contradicts to $x\not\in\mathcal S_\epsilon$. 
By the convexity of $F(\cdot)$ we have
\begin{align*}
F(x) - F(x^\dagger_\epsilon)&\geq (x - x^\dagger_\epsilon)^{\top}v^\dagger_\epsilon =\frac{1}{\zeta} \|x^\dagger_\epsilon - x\|^{2-p}_p\sum_{i=1}^d|[x^\dagger_{\epsilon}-x]_i|^{p-1}\text{sign}([x^\dagger_{\epsilon}-x]_i)[x^\dagger_{\epsilon}-x]_i\\
&=\frac{1}{\zeta}\|x^\dagger_\epsilon - x\|^{2-p}_p\|x^\dagger_\epsilon- x\|_p^p = \frac{\|x^\dagger_\epsilon - x\|^{2}_p}{\zeta}\end{align*}
Next, we bound $\zeta$. Let $1/p + 1/q = 1$.
Since 
\begin{equation}
\begin{split}
\zeta^q\sum_{i}|[ v^\dagger_\epsilon]_i|^q &=  \|x^\dagger_\epsilon - x\|_p^{q(2-p)}\sum_{i=1}^d|[x^\dagger_{\epsilon}-x]_i|^{q(p-1)} =  \|x^\dagger_\epsilon - x\|_p^{q(2-p)}\sum_{i=1}^d|[x^\dagger_{\epsilon}-x]_i|^{p} \\
&=  \|x^\dagger_\epsilon - x\|_p^{q(2-p)+p} = \|x^\dagger_\epsilon - x\|_p^{p/(p-1)}
\end{split}
\end{equation}
Thus
\[
\frac{1}{\zeta}\geq \frac{\|v^\dagger_\epsilon\|_q}{\|x^\dagger_\epsilon - x\|_p^{p/(q(p-1))}} 
\]
To lower bound $\|v^\dagger_\epsilon\|_q$, we explore the convexity of $F(x)$. By the convexity of $F(\cdot)$, 
\[
F(x_\epsilon^*) - F(x^\dagger_\epsilon)\geq (x^*_\epsilon - x^\dagger_\epsilon)^{\top}v^\dagger_\epsilon
\]
where $x^*_\epsilon$ is the closest point in $\Omega_*$ to $x^\dagger_\epsilon$. Then we have
\begin{align*}
 \|x^*_\epsilon - x^\dagger_\epsilon\|_p&\|v^\dagger_\epsilon\|_q\geq  -(x^*_\epsilon - x^\dagger_\epsilon)^{\top}v^\dagger_\epsilon\geq F(x^\dagger_\epsilon) - F(x_\epsilon^*) = F(x^\dagger_\epsilon) - F_* = \epsilon
 \end{align*}
Then
\[
\frac{1}{\zeta}\geq \frac{\epsilon}{\|x^\dagger_\epsilon - x\|_p^{p/(q(p-1))} \|x^*_\epsilon - x^\dagger_\epsilon\|_p}  = \frac{\epsilon}{\|x^\dagger_\epsilon - x\|_p \|x^*_\epsilon - x^\dagger_\epsilon\|_p} 
\]
Therefore 
\[
F(x) - F(x^\dagger_\epsilon)\geq  \frac{\|x^\dagger_\epsilon - x\|^{2}_p}{\zeta}\geq \frac{\epsilon\|x^\dagger_\epsilon - x\|^{2}_p}{\|x^\dagger_\epsilon - x\|_p \|x^*_\epsilon - x^\dagger_\epsilon\|_p} =  \frac{\epsilon\|x^\dagger_\epsilon - x\|_p}{ \|x^*_\epsilon - x^\dagger_\epsilon\|_p}
\]
i.e., 
\[
\|x^\dagger_\epsilon - x\|_p\leq \frac{ \|x^*_\epsilon - x^\dagger_\epsilon\|_p}{\epsilon}(F(x) - F(x^\dagger_\epsilon)) =\frac{ dist_p( x^\dagger_\epsilon, \Omega_*)}{\epsilon}(F(x) - F(x^\dagger_\epsilon))
\]
\end{proof}

\end{document}